\let\cal\mathcal
\let\cal=\mathcal      
\def\mcc{M\raise.5ex\hbox{c}C}
\def\mccarthy{M\raise.5ex\hbox{c}Carthy}
\def\m{Mult}
\def\a{\alpha}
\def\l{\lambda}
\def\vare{\varepsilon}
\let\i=\infty
\def\={\ = \ }
\def\A{{\cal A}}
\def\E{E_\l}
\def\C{\mathbb C}
\def\R{\mathbb R}
\def\D{\mathbb D}
\def\be{\setcounter{equation}{\value{theorem}} \begin{equation}}
\def\ee{\end{equation} \addtocounter{theorem}{1}}
\def\beq{\begin{eqnarray*}}
\def\eeq{\end{eqnarray*}}
\def\vs{\vskip 5pt}
\def\bp{{\sc Proof: }}
\def\ep{{}{\hfill $\Box$} \vskip 5pt \par}
\def\bl{\begin{lemma}}
\def\el{\end{lemma}}
\def\bt{\begin{theorem}}
\def\et{\end{theorem}}
\def\bprop{\begin{prop}}
\def\eprop{\end{prop}}
\def\bd{\begin{definition}}
\def\ed{\end{definition}}
\def\br{\begin{remark}}
\def\er{\end{remark}}
\def\bexer{\begin{exercise}}
\def\eexer{\end{exercise}}
\def\bfig{\begin{figure}}
\def\efig{\end{figure}}
\numberwithin{equation}{section}
\date{February 19, 2014}
\title{The implicit function theorem and  free algebraic sets
\footnote{MSC 14M99, 16S50. Key Words: NC functions, free holomorphic functions, free algebraic sets  }
}
\author{Jim Agler
\thanks{Partially supported by National Science Foundation Grant
DMS 1068830}
\\ U.C. San Diego\\ La Jolla, CA 92093
\and
John E. M\raise.5ex\hbox{c}Carthy
\thanks{Partially supported by National Science Foundation Grant  
DMS 1300280
}
\\ Washington University\\ St. Louis, MO 63130}
\def\be{\begin{equation}}
\def\ee{\end{equation}}
\def\norm#1{\| #1 \|}
\def\m{\mathbb{M}}
\def\md{\mathbb{M}^{[d]}}
\def\mn{\mathbb{M}_n}
\def\mnd{\mathbb{M}_n^d}
\def\mnk{\mathbb{M}_n^k}
\def\c{\mathbb{C}}
\def\D{\mathbb{D}}
\def\r{\mathbb{R}}
\def\n{\mathbb{N}}
\def\k{\mathcal{K}}
\def\L{\mathcal{L}}
\def\O{\Omega}
\def\set#1#2{\{ #1 \, | \, #2\}}
\def\idd{{\rm id}}
\def\vspace{\mathcal{V}}
\def\pd{\mathbb{P}^d}
\newcommand\gdel{G_\delta}
\def\si{s^{-1}}
\def\l{\lambda}
\def\Ax{{\mathcal A}_x}
\renewcommand\={\ = \ }
\def\vs{\vskip 5pt}
\def\C{\mathbb{C}}
\def\i{\infty}
\renewcommand\d{\delta}
\def\E{{\mathcal{E}}}
\def\bd{\begin{defin}}
\def\ed{\end{defin}}
\def\be{\begin{equation}}
\def\ee{\end{equation}}
\def\bp{\begin{proof}}
\def\ep{\end{proof}}
\def\begm{\left(\begin{matrix}}
\def\endm{\end{matrix}\right)}
\def\begbm{\begin{bmatrix}}
\def\endbm{\end{bmatrix}}
\def\E2{E^{[2]}}
\theoremstyle{definition}
\newtheorem{theorem}[equation]{Theorem}
\newtheorem{defin}[equation]{Definition}
\newtheorem{lem}[equation]{Lemma}
\newtheorem{lemma}[equation]{Lemma}
\newtheorem{prop}[equation]{Proposition}
\newtheorem{ques}[equation]{Question}
\newtheorem{remark}[equation]{Remark}
\newtheorem{cor}[equation]{Corollary}
\newtheorem{example}[equation]{Example}
\def\bl{\begin{lem}}
\def\el{\end{lem}}
\def\bt{\begin{theorem}}
\def\et{\end{theorem}}
\def\U{\mathcal U}
\def\beginb{\begin{bmatrix}}
\def\endb{\end{bmatrix}}
\def\beginp{\begin{pmatrix}}
\def\endp{\end{pmatrix}}
\def\bkt{\ [\times]\ }
\def\ak{a^{(k)}}
\begin{document}

\bibliographystyle{plain}

\maketitle

Abstract:
We prove an implicit function theorem for non-commutative functions.
We use this to show that if $p(X,Y)$ is a generic non-commuting polynomial in two 
variables, and $X$ is a generic matrix, then all solutions $Y$ of $p(X,Y)=0$ will commute with $X$.

\section{Introduction}
\label{seca}

A free polynomial, or nc polynomial (nc stands for non-commutative), is a polynomial in non-commuting variables.
Let $\pd$ denote the algebra of free polynomials in $d$ variables. If $p \in \pd$, it makes sense to think of $p$ as a function that can be evaluated on matrices.
Let $\mn$ be the set of $n$-by-$n$ complex matrices,
and $\md = \cup_{n=1}^\i \mnd$. A free algebraic set is a subset of $\md$ that is the 
common zero set of a collection of free polynomials.

One  principal result in this paper
  is that, in some generic sense,
if $X$ and $Y$ are in $\mn$ and $p(X,Y) = 0$ for some $p \in {\mathbb P}^2$, then $Y$ commutes with $X$.
To explain what we mean by ``generically'', consider the following specific example.
Let $a,b,c$ be complex numbers, and
let 
$$p(X,Y) \= aX^2 + bXY + cYX . $$
Then we show in Proposition~\ref{propg3} that
if $p(X,Y) =0$, then $Y$ must commute with $X$
unless $bX$ and $-cX$ have a common eigenvalue.
We extend this to a general theorem about free algebraic sets defined by $d-1$ polynomials in $d$ variables in Theorem~\ref{thmg1}.

 An nc function is a  generalization of a free polynomial, just as a holomorphic function in scalar variables can be thought of as a
generalization of a polynomial in commuting variables.

To make this precise, 
define a  {\em graded function}  to be a function $f$, with domain 
some subset of $\md$, and
with the property that
if $x \in \mnd$, then $f(x) \in \mn$.
\begin{defin}\label{defa1}
An {\em nc-function} is a graded function $f$ defined on a
set $\Omega \subseteq \md$ such that

i) If $x,y, x\oplus y \in \Omega$, then $f(x \oplus y) = f(x) \oplus f(y)$.

ii) If $s \in \mn$ is invertible and $x, s^{-1} x s \in \Omega \cap \mnd$, then $f(\si x s) = \si f(x) s$.

\end{defin}

Free polynomials are examples of nc-functions.
Nc-functions have been studied for a variety of reasons:
by Anderson \cite{an69} as a generalization of the Weyl calculus;
 by Taylor \cite{tay73}, in the context of the functional calculus 
for non-commuting operators;
Popescu \cite{po06,po08,po10,po11}, in the context of extending classical function theory to 
$d$-tuples of bounded operators;
 Ball, Groenewald and Malakorn \cite{bgm06}, in the context of extending realization formulas
from functions of commuting operators to functions of non-commuting operators;
Alpay and Kalyuzhnyi-Verbovetzkii \cite{akv06} in the context of
realization formulas for rational functions that are $J$-unitary on the boundary of the domain;
Helton \cite{helt02} in proving  positive matrix-valued functions are sums of squares;
 and
Helton, Klep and McCullough \cite{hkm11a,hkm11b}
and Helton and McCullough \cite{hm12} in the context of developing a descriptive theory of the domains on
which LMI and semi-definite programming apply.
Recently, Kaliuzhnyi-Verbovetskyi  and Vinnikov have written a monograph on the subject
\cite{kvv12}.

We need to introduce topologies  on $\md$. 
First, we define the {\em  disjoint union topology} by saying
that  a set $U$ is open in the disjoint union topology if and only if $ U \cap \mnd$ is open for every $n$. We shall abbreviate disjoint union as d.u. A set $V \subset \md$ is {\em bounded} if there exists
a positive real number $B $
such that $\| x \| \leq B$ for every $x$ in $V$.

We shall say that a set $ \Omega \subseteq \md$ is an {\em nc domain} if it is closed under direct 
sums and unitary conjugations, and is open in the d.u. topology. We shall say that a topology is an {\em admissible topology}
if it has a basis of bounded nc domains.

\begin{defin}
\label{defa2}
Let $\tau$ be an admissible topology on $\md$, and let $\O$ be a $\tau$-open set.
A {$\tau$-holomorphic function} is an nc-function $f : \O \to \m$ that is $\tau$ locally bounded.
\end{defin}

Note that if $f$ is a $\tau$-holomorphic function, then for every $a \in \O \cap \mnd$ and every $h \in \mnd$,
the derivative
\be
\label{eqa1}
Df(a) [h] \ := \
\lim_{t \to 0} 
\frac{1}{t} [ f(a + th) - f(a) ]
\ee
exists \cite{amfree}.

In Section~\ref{secb1} we shall define some  particular admissible topologies: the fine, fat, and free
topologies.
The properties of nc holomorphic functions turn out to depend critically on the choice of topology.
In the free topology there is an Oka-Weil theorem, and in particular every free holomorphic function $f$
has the property that $f(x)$ is in the algebra generated by $x$ for every $x$ in the domain
\cite{amfree}; this property was crucial in the authors' study of Pick interpolation for free holomorphic functions \cite{amfreePick}.
Pointwise approximation of holomorphic functions by polynomials fails for the fine and fat  topologies: the following result is a consequence
of Theorem \ref{thmf1}.

\bt
\label{thma1}
For $d \geq 2$, there is a  fat holomorphic function that is not 
pointwise approximable by free polynomials.
\et

The fine and fat topologies do have good properties, though. J.~Pascoe proved
 an Inverse Function theorem for fine holomorphic maps \cite{pas13}. We extend this in 
Theorem~\ref{thme1} to the fat category. 
In Theorem~\ref{thmc1}, we prove 
an Implicit Function theorem in the fine and fat topologies.
Here is a special case, when the zero set is of a single function.

\vs

\bt
Let $U$ an nc domain.
Let $f$ be a fine (resp. fat) holomorphic function on $U$.
Suppose that
\[
\forall \, a \in U, \ \left[ \frac{\partial f}{ \partial x^d} (a) [h] = 0 \right] \ \Rightarrow h = 0.
\]
Let $W$ be the projection onto the first $d-1$ coordinates of $Z_f \cap U$.
Then there is a fine (resp. fat) holomorphic  function $g$ on $W$ such that
\[
Z_f \cap U \=
\{ (y,g(y)) \, : \, y \in W \} .
\]
\et

\vs

The advantage of working with the fat topology is that
we prove in Theorem~\ref{prope1} that if the derivative of a fat holomorphic function is full rank 
at a point, then it is full rank in a fat neighborhood of the point. This fact, along with the Implicit function theorem, is used to prove Theorem~\ref{thma1}.

\vs

Our final result is that there is no Goldilocks topology.
In Theorem~\ref{thmh1}
we show that if $\tau$ is an admissible topology on $\md$ with the properties that:

(i) free polynomials are continuous from $(\md, \tau) $ to $(\m^{[1]}, d.u.)$

 (ii) $\tau$-holomorphic functions are pointwise approximable by nc polynomials,

then there is no $\tau$ Implicit function theorem.

\section{Background material}
\label{secb}


The following lemma is in \cite{hkm11b} and \cite{kvv12}.
\begin{lem}\label{lembb1}(cf. Lemma 2.6 in \cite{hkm11b}). 
Let $\Omega$ be an nc set in $\m^d$, 
and let $f$ be an
 nc-function on $\Omega$. Fix $n \ge 1$ and $\Gamma \in \m_n$. If $a,b \in \Omega \cap \mn^d$ and
\[
\begin{bmatrix}b&b\Gamma -\Gamma a\\ 0&a\end{bmatrix} \in \Omega \cap \m_{2n}^d,
\]
then
\be\label{bb1}
f(\begin{bmatrix}b&b\Gamma -\Gamma a\\ 0&a\end{bmatrix}) \= \begin{bmatrix}f(b)&f(b)\Gamma-\Gamma f(a)\\ 0&f(a)\end{bmatrix}.
\ee
\end{lem}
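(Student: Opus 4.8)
The plan is to exhibit the $2n$-by-$2n$ matrix $M:=\begin{bmatrix}b&b\Gamma-\Gamma a\\0&a\end{bmatrix}$ as a similarity of the direct sum $b\oplus a$, and then apply the two defining properties of an nc-function in Definition~\ref{defa1}. Put
\[
s\ :=\ \begin{bmatrix}I_n&\Gamma\\0&I_n\end{bmatrix}\in\mathbb{M}_{2n},\qquad s^{-1}=\begin{bmatrix}I_n&-\Gamma\\0&I_n\end{bmatrix},
\]
so that $s$ is invertible. A one-line block computation gives $s^{-1}(b\oplus a)s=M$; the only point needing a moment's care is the order of the three factors, since the opposite conjugation would place $\Gamma a-b\Gamma$ in the corner rather than $b\Gamma-\Gamma a$.

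Since $\Omega$ is an nc set it is closed under direct sums, so $b\oplus a\in\Omega\cap\mathbb{M}_{2n}^d$, and property (i) of Definition~\ref{defa1} gives $f(b\oplus a)=f(b)\oplus f(a)$. By hypothesis $M\in\Omega\cap\mathbb{M}_{2n}^d$ as well, and $M=s^{-1}(b\oplus a)s$; hence property (ii) of Definition~\ref{defa1} applies. It is worth emphasizing that this property is stated for an \emph{arbitrary} invertible $s$, not merely for unitaries, which is precisely what legitimizes the use of the non-unitary $s$ above — the justification being that both $b\oplus a$ and $s^{-1}(b\oplus a)s=M$ are assumed to lie in $\Omega$. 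We conclude
\[
f(M)\ =\ s^{-1}f(b\oplus a)s\ =\ s^{-1}\bigl(f(b)\oplus f(a)\bigr)s.
\]

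It then remains only to carry out the final block multiplication $s^{-1}\bigl(f(b)\oplus f(a)\bigr)s$, which yields $\begin{bmatrix}f(b)&f(b)\Gamma-\Gamma f(a)\\0&f(a)\end{bmatrix}$, the right-hand side of~(\ref{bb1}) — literally the same computation as the one identifying $M$, now with $a,b$ replaced by $f(a),f(b)$. I do not expect a genuine obstacle here: the entire content of the lemma is the observation that the block upper-triangular matrices with a $\Gamma$-twisted corner form a similarity orbit of block-diagonal matrices, with the nc-function axioms doing the rest. The only things to watch are the block bookkeeping of the conjugation and the need to have \emph{both} $M$ and $b\oplus a$ in $\Omega$ so that axiom (ii) is applicable.
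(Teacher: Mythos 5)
Your proof is correct, and it is the standard argument — indeed the paper does not give a proof of Lemma~\ref{lembb1} at all, but simply cites it from \cite{hkm11b} and \cite{kvv12}, where this is exactly how it is established. The key observations are all in place: the identification $M=s^{-1}(b\oplus a)s$ with $s=\begin{bmatrix}I&\Gamma\\0&I\end{bmatrix}$, the use of closure under direct sums to put $b\oplus a$ in $\Omega$, and the point you rightly emphasize that axiom (ii) of Definition~\ref{defa1} is stated for arbitrary invertible $s$ (not just unitaries) and only requires that \emph{both} $x$ and $s^{-1}xs$ lie in $\Omega$, which is guaranteed here by closure under direct sums and by the hypothesis on $M$. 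The final block multiplication $s^{-1}(f(b)\oplus f(a))s=\begin{bmatrix}f(b)&f(b)\Gamma-\Gamma f(a)\\0&f(a)\end{bmatrix}$ checks out. Nothing to add.
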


If we let $b = a + th$ and $\Gamma = \frac{1}{t}$, and let $t$ tend to $0$, we get
\begin{lem}
\label{lembb2}
Let $U \subseteq \md $ be d.u. open, and suppose that $a \in U$ and 
$ \begin{bmatrix}a&h\\0&a\end{bmatrix}  \in U$. Then
\be
\label{eqbb2}
f( \begin{bmatrix}a&h\\0&a\end{bmatrix} )
\=
\begin{bmatrix}f(a)&Df(a)[h]\\0&f(a)\end{bmatrix}  .
\ee
\end{lem}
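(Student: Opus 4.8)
The plan is to read \eqref{eqbb2} directly off of Lemma~\ref{lembb1} in the limit $t\to 0$, exactly as the sentence preceding the statement suggests. Fix $n$ with $a\in U\cap\mn^d$ and a direction $h\in\mn^d$ such that $M_0:=\begin{bmatrix}a&h\\0&a\end{bmatrix}\in U$. For a nonzero scalar $t$, I would apply Lemma~\ref{lembb1} with $b:=a+th$ and $\Gamma:=\tfrac1t\,I_n$. Then $b\Gamma-\Gamma a=\tfrac1t(a+th)-\tfrac1t a=h$, so the $2n$-by-$2n$ tuple occurring in that lemma is precisely $M_t:=\begin{bmatrix}a+th&h\\0&a\end{bmatrix}$.

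First I would check that the hypotheses of Lemma~\ref{lembb1} hold for all sufficiently small $|t|$. Since $U$ is open in the d.u.\ topology, $U\cap\mn^d$ is open in $\mn^d$ and contains $a$, so $b=a+th\in U$ once $|t|$ is small; likewise $U\cap\m_{2n}^d$ is open in $\m_{2n}^d$ and contains $M_0$, so $M_t\in U$ once $|t|$ is small. For such $t$, Lemma~\ref{lembb1} (with $\Gamma=\tfrac1t I_n$) gives
\[
f(M_t)\=\begin{bmatrix}f(a+th)&\tfrac1t\big(f(a+th)-f(a)\big)\\ 0&f(a)\end{bmatrix}.
\]

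Now I would let $t\to 0$. Because $f$ is continuous (being a locally bounded nc-function) and $M_t\to M_0$ in $\m_{2n}^d$, we get $f(M_t)\to f(M_0)$. Comparing entries: the $(2,2)$ block of $f(M_t)$ is identically $f(a)$, and the $(1,1)$ block $f(a+th)$ tends to $f(a)$; hence the off-diagonal block $\tfrac1t(f(a+th)-f(a))$ must also converge, and by the definition \eqref{eqa1} its limit is $Df(a)[h]$ (this incidentally re-derives the existence of the directional derivative recorded after \eqref{eqa1}). Passing to the limit in the displayed identity therefore yields $f(M_0)=\begin{bmatrix}f(a)&Df(a)[h]\\0&f(a)\end{bmatrix}$, which is \eqref{eqbb2}. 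The entire argument is a one-line substitution into Lemma~\ref{lembb1}; the only point needing justification is the passage $f(M_t)\to f(M_0)$, i.e.\ continuity of $f$ at $M_0$, which is the sole analytic input beyond the purely algebraic Lemma~\ref{lembb1} and is immediate for holomorphic nc-functions.
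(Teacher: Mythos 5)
Your proof is correct and is exactly the argument the paper intends: the paper derives Lemma~\ref{lembb2} from Lemma~\ref{lembb1} by the one-line substitution $b=a+th$, $\Gamma=\tfrac1t$, and letting $t\to0$, which is precisely what you carry out (with the openness and continuity checks made explicit). No issues.
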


Combining these two results, we get
\begin{lem}\label{lembb3}
Let $\Omega$ be an nc domain in $\m^d$, 
 let $f$ be an
 nc-function on $\Omega$, and let $a \in \Omega$.
Then 
\[
Df(a) [ a \Gamma - \Gamma a] \= f(a) \Gamma - \Gamma f(a) .
\]
\end{lem}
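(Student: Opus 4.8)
The plan is to evaluate $f$ on a single $2n \times 2n$ matrix in two different ways and compare the $(1,2)$ blocks. Fix $\Gamma \in \m_n$ and set $h := a\Gamma - \Gamma a$. Lemma~\ref{lembb1}, applied with $b = a$, computes $f$ on $\begin{bmatrix} a & h \\ 0 & a \end{bmatrix}$ and returns $\begin{bmatrix} f(a) & f(a)\Gamma - \Gamma f(a) \\ 0 & f(a) \end{bmatrix}$; Lemma~\ref{lembb2}, applied to the same matrix, returns $\begin{bmatrix} f(a) & Df(a)[h] \\ 0 & f(a) \end{bmatrix}$. Equating the $(1,2)$ blocks gives $Df(a)[a\Gamma - \Gamma a] = f(a)\Gamma - \Gamma f(a)$, which is the assertion.

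The only gap to fill is that both lemmas require the block matrix to lie in $\Omega$. For Lemma~\ref{lembb2} this is automatic, since $\Omega$ is d.u. open; but Lemma~\ref{lembb1} genuinely needs $\begin{bmatrix} a & h \\ 0 & a \end{bmatrix} \in \Omega$, and this matrix equals $S^{-1}(a \oplus a) S$ for $S = \begin{bmatrix} I & \Gamma \\ 0 & I \end{bmatrix}$, which is only similar, not unitarily equivalent, to $a \oplus a$ — so membership in $\Omega$ is not free. I would handle this by a scaling argument: because $\Omega$ is closed under direct sums we have $a \oplus a \in \Omega \cap \m_{2n}^d$, and because $\Omega$ is d.u. open there is $\varepsilon > 0$ with $\begin{bmatrix} a & k \\ 0 & a \end{bmatrix} \in \Omega$ whenever $\|k\| < \varepsilon$. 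Choose $t > 0$ small enough that $\|t h\| < \varepsilon$; then $\begin{bmatrix} a & th \\ 0 & a \end{bmatrix} \in \Omega \cap \m_{2n}^d$.

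Now run the two computations with $t\Gamma$ in place of $\Gamma$, noting that $a(t\Gamma) - (t\Gamma)a = th$: Lemma~\ref{lembb1} gives $(1,2)$ block $t\bigl(f(a)\Gamma - \Gamma f(a)\bigr)$, and Lemma~\ref{lembb2} gives $(1,2)$ block $Df(a)[th]$. Since $Df(a)[\,\cdot\,]$ is homogeneous of degree one in its bracketed argument — immediate from the defining limit in~\eqref{eqa1} — we have $Df(a)[th] = t\,Df(a)[h]$, so $t\,Df(a)[a\Gamma - \Gamma a] = t\bigl(f(a)\Gamma - \Gamma f(a)\bigr)$; dividing by $t$ finishes the proof. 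I expect the domain-membership point to be the only real obstacle; the rest is routine block-matrix bookkeeping together with the two lemmas already in hand.
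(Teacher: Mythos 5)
Your proof is correct and takes the route the paper itself intends: the paper states Lemma~\ref{lembb3} with no written proof beyond the phrase ``Combining these two results,'' meaning precisely that one evaluates $f$ on $\begin{bmatrix} a & a\Gamma - \Gamma a \\ 0 & a \end{bmatrix}$ once via Lemma~\ref{lembb1} (with $b=a$) and once via Lemma~\ref{lembb2}, then compares $(1,2)$ blocks. You have identified the genuine wrinkle the paper silently passes over — that the relevant $2n\times 2n$ block matrix is only similar, not unitarily similar, to $a\oplus a$, so its membership in $\Omega$ is not guaranteed by the nc-domain axioms — and your fix (replace $\Gamma$ by $t\Gamma$ for small $t>0$ so the block matrix lands in a d.u.\ neighborhood of $a\oplus a$, apply both lemmas, then cancel $t$ using degree-one homogeneity of $Df(a)[\,\cdot\,]$) is exactly the right way to close that gap. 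One small imprecision: you say the membership requirement is ``automatic'' for Lemma~\ref{lembb2}; in fact both lemmas need the same block matrix in $\Omega$, so the scaling is what makes \emph{both} applications legitimate — but since you perform the scaling anyway, this does not affect the correctness of the argument.
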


By an $\L(\c^\ell, \c^k)$ valued nc function we mean a $k$-by-$\ell$ valued 
matrix of nc functions. 
An  $\L(\c,\c^k)$ valued nc function $f$ can be thought of as 
a vector of $k$ nc functions, $(f_1, \dots, f_k)^t$.
When $k=d$, we 
shall call a $d$-tuple of nc functions on a set in $\md$ an {\em nc map.}

If $\Phi$ is an $\L(\c^\ell, \c^k)$ valued nc function,
then if $a \in \mnd$, the derivative $D\Phi(a)$ is in $\L(\mnd, \mn \otimes \L(\c^\ell, \c^k))$.

\section{Admissible Topologies}
\label{secb1}

\subsection{The fine topology}

The {\em fine topology} is the topology that has as a basis all nc domains.
Since this is the largest admissible topology, for any admissible topology $\tau$,
any $\tau$-holomorphic function is automatically 
fine holomorphic. 

\begin{lem}
\label{lemb1}
Suppose $\Omega$ is an nc domain, and $f : \Omega \to \m$ is d.u. locally bounded.
Then $f$ is a fine holomorphic function.
\end{lem}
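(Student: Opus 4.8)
The plan is to use that the fine topology has all nc domains as a basis, so that $\Omega$ is itself fine open; it therefore suffices to show that the nc-function $f$ is fine locally bounded, i.e.\ that each $a\in\Omega$ has an \emph{nc domain} $V$ with $a\in V\subseteq\Omega$ on which $f$ is bounded. I would produce $V$ as a sublevel set of $\|f\|$, and for that I first need to know that $f$ is continuous.

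For continuity, fix $a\in\Omega\cap\mnd$. Since $\Omega$ is closed under direct sums, $a\oplus a\in\Omega$; since $\Omega$ is d.u.\ open and $f$ is d.u.\ locally bounded, there are $\delta>0$ and $M<\infty$ such that every matrix $\begin{bmatrix}a+th&h\\0&a\end{bmatrix}$ with $\|h\|<\delta$ and $|t|\le 1$ lies in $\Omega$ and has $\|f(\cdot)\|\le M$ (all such matrices sit in one small d.u.\ ball about $a\oplus a$). For $\|h\|<\delta$ and small $|t|$, Lemma~\ref{lembb1} applied with $b=a+th$ and $\Gamma=\tfrac1t I$ gives
\[
f\left(\begin{bmatrix}a+th&h\\0&a\end{bmatrix}\right)=\begin{bmatrix}f(a+th)&\tfrac1t\bigl(f(a+th)-f(a)\bigr)\\0&f(a)\end{bmatrix},
\]
so bounding the norm of the $(1,2)$ entry by $M$ yields $\|f(a+th)-f(a)\|\le M\,|t|$. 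Holding $\|h\|$ at a fixed value below $\delta$ and absorbing the scale into $t$ turns this into $\|f(a+v)-f(a)\|=O(\|v\|)$ as $v\to0$, so $f$ is continuous — indeed locally Lipschitz — at $a$; as $a$ was arbitrary, $f$ is continuous on $\Omega$. (Alternatively one may simply cite the corresponding statement in \cite{amfree}.)

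Now fix $a$, put $M:=\|f(a)\|+1$, and let $V:=\{x\in\Omega:\|f(x)\|<M\}$. Continuity of $\|f\|$ together with d.u.\ openness of $\Omega$ makes $V$ d.u.\ open; the identity $\|f(\si xs)\|=\|\si f(x)s\|=\|f(x)\|$ for unitary $s$ makes $V$ closed under unitary conjugation; and for $x,y\in V$ the nc-function identity $f(x\oplus y)=f(x)\oplus f(y)$ gives $\|f(x\oplus y)\|=\max\{\|f(x)\|,\|f(y)\|\}<M$, so $V$ is closed under direct sums. Since $a\in V$, the set $V$ is an nc domain with $a\in V\subseteq\Omega$ and $\sup_V\|f\|\le M$. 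Hence $f$ is fine locally bounded, and being an nc-function it is a fine holomorphic function.

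I expect the continuity step to be the real work: one must pick exactly the specialization $b=a+th$, $\Gamma=t^{-1}I$ in Lemma~\ref{lembb1}, convert mere boundedness into a Lipschitz estimate, and check that the relevant $2n$-by-$2n$ matrices stay inside $\Omega$ — which is precisely the point at which closure of $\Omega$ under direct sums, and not merely its d.u.\ openness, is used. The sublevel-set step is then routine: the identity $f(x\oplus y)=f(x)\oplus f(y)$ forces $\|f\|$ to behave additively-by-maximum under $\oplus$, so its sublevel sets are automatically closed under direct sums.
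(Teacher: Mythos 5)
Your proof is correct and follows the same strategy as the paper: pass to the sublevel set $U=\{x\in\Omega:\|f(x)\|<\|f(a)\|+1\}$, note that the nc-function identities $f(s^{-1}xs)=s^{-1}f(x)s$ and $f(x\oplus y)=f(x)\oplus f(y)$ force $U$ to be an nc set, and show $U$ is d.u.\ open, hence an nc domain (so fine open) on which $f$ is bounded. The only difference is that where the paper simply cites \cite{amfree} for the d.u.\ openness of $U$, you supply the underlying local Lipschitz estimate directly via Lemma~\ref{lembb1} with $b=a+th$ and $\Gamma=t^{-1}I$, which is precisely the argument that reference carries out.
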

\bp
Let $a \in \Omega$, and $\| f(a) \| = M$.
Let $U = \{ x \in \Omega : \| f(x) \| < M + 1 \}$.
Then  $U$ is an nc set, and by  \cite{amfree} it is d.u. open.
Therefore it is a fine open set.
\ep

It follows from the lemma that the class of nc functions considered in
\cite{hkm11b,pas13} is what we are calling 
fine holomorphic functions.

\vs
J. Pascoe proved the following inverse function theorem in \cite{pas13}.
The equivalence of (i) and (iii) is due to Helton, Klep and McCullough \cite{hkm11b}.

\bt
\label{thmb3}

Let  $\Omega \subseteq \md$ be an nc domain.
Let $\Phi$ be a fine holomorphic map on $\Omega$. Then the following are equivalent:

(i)  $\Phi$ is injective on $\Omega$.

(ii)
$D\Phi(a)$ is non-singular for every $a \in \Omega$.

(iii) The function $\Phi^{-1}$ exists and is  a  fine holomorphic  map.
\et

\subsection{The Fat topology}
\label{secb2}

Let $\r^+ = \set{r \in \r}{r >0}$. For $n \in \n$, $a  \in \m_n^d$, and $r \in \r^+$, we let $D_n(a,r) \subseteq \m_n^d$ be the matrix polydisc defined by
\be\label{10}
D_n(a,r) = \set{x \in \m_n^d}{\max_{1 \le i \le d}\norm{x_i -a_i} <r}.
\ee
If $a \in \mn^d$, $r \in \r^+$, we define $D(a,r) \subseteq \m^d$ by 
\be\label{20}
D(a,r) = \bigcup_{k=1}^\infty D_{kn}(a^{(k)},r),
\ee
where $a^{(k)}$ denotes the direct sum of $k$ copies of $a$. Finally, if $a \in \m^d$, $r \in \r^+$, we define $F(a,r) \subseteq \m^d$ by
\be\label{30}
F(a,r) = \bigcup_{m=1}^\infty\ \bigcup_{\ u \in \U_m} u^{-1} \big(D(a,r) \cap \m_m^d \big)\ u,
\ee
where $\U_m$ denotes the set of $m \times m$ unitary matrices.
\begin{lem}\label{lem10}
If $a \in \m^d$ and $r \in \r^+$, then $F(a,r)$ is an nc domain.
\end{lem}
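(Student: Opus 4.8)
The plan is to verify the three defining properties of an nc domain for $F(a,r)$: closure under direct sums, closure under unitary conjugation, and openness in the d.u. topology. The set $F(a,r)$ is built in three stages --- $D_n(a,r)$, then $D(a,r)$, then $F(a,r)$ --- so I would establish the properties by working outward through these stages, using at each stage what was proved at the previous one.

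First I would record the easy structural facts. Each matrix polydisc $D_n(a,r)$ is norm-open in $\m_n^d$, hence $D(a,r) = \bigcup_k D_{kn}(a^{(k)},r)$ is open in the d.u. topology, being a union of sets each open at a single level $kn$. For closure under unitary conjugation of the full set $F(a,r)$: if $x \in F(a,r) \cap \m_m^d$, then by definition $x = u^{-1} y\, u$ for some $u \in \U_m$ and some $y \in D(a,r) \cap \m_m^d$; conjugating $x$ by any further $v \in \U_m$ gives $v^{-1} u^{-1} y\, u\, v = (uv)^{-1} y\, (uv)$ with $uv \in \U_m$, which again lies in $F(a,r)$. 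So the outer union over unitaries in \eqref{30} is precisely what makes $F(a,r)$ unitarily invariant, and this step is immediate.

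The two genuine points are (1) closure of $F(a,r)$ under direct sums and (2) openness of $F(a,r)$ in the d.u. topology; I expect (2) to be the main obstacle. For (1): given $x \in F(a,r)\cap\m_m^d$ and $x' \in F(a,r)\cap\m_{m'}^d$, write $x = u^{-1}y\,u$ and $x' = (u')^{-1}y'\,u'$ with $y \in D(a,r)$, $y' \in D(a,r)$, $u \in \U_m$, $u' \in \U_{m'}$. Then $x \oplus x' = (u \oplus u')^{-1}(y \oplus y')(u\oplus u')$ with $u \oplus u' \in \U_{m+m'}$, so it suffices to show $y \oplus y' \in D(a,r)$. By definition $y \in D_{km}(a^{(k)},r)$ and $y' \in D_{k'm'}(a^{(k')},r)$ for some $k,k'$; after reindexing (replacing $k$ by $kk'm'$ and $k'$ by $k'km$, so both are multiples of a common integer $N$ with $a^{(N)}$ of the right size on each side), one sees $y\oplus y'$ is a direct sum of copies of $a$ perturbed entrywise by less than $r$ in each coordinate --- but one must be careful, since $y$ need not itself be a direct sum of copies of $a$. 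The right statement is simply that $D_n(a^{(k_1)},r) \oplus D_n(a^{(k_2)},r) \subseteq D_{n(k_1+k_2)}(a^{(k_1+k_2)},r)$ because the operator norm of a direct sum is the max of the norms, and this handles the coordinatewise estimate cleanly once the base points are aligned.

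For (2), openness of $F(a,r)$, fix $x \in F(a,r)\cap\m_m^d$; I must produce a d.u.-open set containing $x$ inside $F(a,r)$, and since $F(a,r)$ is unitarily invariant it is enough to do this when $x \in D(a,r)$, i.e. $x \in D_m(a^{(k)},r)$ for some $k$ (writing $m$ for the matrix size, so $m$ is a multiple of the size of $a$). The set $D_m(a^{(k)},r)$ is norm-open, so there is $\varepsilon>0$ with $D_m(x,\varepsilon) \subseteq D_m(a^{(k)},r) \subseteq D(a,r) \subseteq F(a,r)$; that gives openness at the single level $m$. The subtlety is that d.u.-openness requires a neighborhood of $x$ that is open \emph{at every level} $\ell$, and $F(a,r)$ meets levels not equal to $m$; but a d.u.-open set need only intersect each level in an open set, and there is no requirement of compatibility across levels, so in fact one takes the neighborhood $U$ to be $D_m(x,\varepsilon)$ at level $m$ together with the empty set at every other level --- wait, that is not open as a subset of $F(a,r)$ unless we intersect. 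The clean argument: a subset $V$ of $\m^d$ is d.u.-open iff $V\cap\m_\ell^d$ is norm-open for each $\ell$; so to show $F(a,r)$ is d.u.-open it suffices to show $F(a,r)\cap\m_\ell^d$ is norm-open for every $\ell$. Fix $\ell$ and $x \in F(a,r)\cap\m_\ell^d$; by unitary invariance assume $x \in D(a,r)\cap\m_\ell^d = D_\ell(a^{(k)},r)$ for the appropriate $k$ (here $\ell$ is necessarily a multiple of the size of $a$, else the intersection is empty). Then $D_\ell(x,\varepsilon)$ for small $\varepsilon$ is a norm-neighborhood of $x$ in $\m_\ell^d$ contained in $D_\ell(a^{(k)},r) \subseteq F(a,r)$, establishing norm-openness at level $\ell$. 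Thus $F(a,r)$ is d.u.-open, and combined with the direct-sum and unitary-invariance properties above, $F(a,r)$ is an nc domain. $\hfill\Box$
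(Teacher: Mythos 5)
Your proof is correct and follows the paper's own line: unitary invariance is read off directly from the definition of $F(a,r)$, and closure under direct sums is reduced---by conjugating with $u_1\oplus u_2$---to the observation that a direct sum of points of $D(a,r)$ lies again in $D(a,r)$ because the operator norm of a block-diagonal matrix is the maximum of the block norms. You additionally spell out the d.u.-openness of $F(a,r)$ level by level, which the paper's proof leaves unstated; the brief ``reindexing'' digression and the index slip in $D_n(a^{(k_1)},r)$ (the subscript should be the size of $a^{(k_1)}$, not of $a$) are unnecessary detours, but the final inclusion you state is the right one.
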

\begin{proof}
It is immediate from \eqref{30} that $F(a,r)$ is closed with respect to unitary similarity. To see that $F(a,r)$ is closed with respect to direct sums, assume that $y_1= u_1^{-1}x_1u_1 \in F(a,r)$ and $y_2= u_2^{-1}x_2u_2 \in F(a,r)$ where $x_1, x_2 \in D(a,r)$. Noting that \eqref{10} and \eqref{20} imply that $x_1 \oplus x_2 \in D(a,r)$ we see that
\begin{align*}
y_1 \oplus y_2 &=  (u_1^{-1}x_1u_1)\  \oplus\ ( u_2^{-1}x_2u_2)\\
&= (u_1 \oplus u_2)^{-1} \big(x_1 \oplus x_2\big) (u_1 \oplus u_2)\\
& \in F(a,r).
\end{align*}
\end{proof}
\begin{lem}\label{lem20}
Let $a,b \in \m^d$, $r,s \in \r^+$ and assume that $x \in F(a,r) \cap F(b,s)$. There exists $\epsilon \in \r^+$ such that $F(x,\epsilon) \subseteq F(a,r) \cap F(b,s)$.
\end{lem}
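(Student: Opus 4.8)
The plan is to reduce everything to the structure of the set $D(a,r)$, which is the direct-sum-saturation of an ordinary matrix polydisc, and then exploit the fact that $F(a,r)$ is the unitary-conjugation-saturation of that. First I would unwind the hypothesis: if $x \in F(a,r) \cap F(b,s)$, then by~\eqref{30} there are $m$, a unitary $u \in \U_m$, and a point $x' = u x u^{-1} \in D(a,r) \cap \m_m^d$; similarly there are $m'$, a unitary $v \in \U_{m'}$, and $x'' = v x v^{-1} \in D(b,s)$. Since $x \in \m_n^d$ for some $n$, necessarily $m$ and $m'$ are multiples of $n$, say $m = kn$ and $m' = k'n$, and (by definition of $D(a,r)$ via~\eqref{20}) the point $x'$ lies in $D_{kn}(a^{(k)}, r)$, i.e. $\|x'_i - (a^{(k)})_i\| < r$ for each $i$, and similarly for $x''$.

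Next I would find a single explicit radius that works. The natural candidate is
\[
\epsilon \= \min\Bigl(\, r - \max_i \|x'_i - (a^{(k)})_i\|\,,\ s - \max_i \|x''_i - (b^{(k')})_i\|\,\Bigr),
\]
which is strictly positive by the previous paragraph. Now take any $z \in F(x,\epsilon)$; I must show $z \in F(a,r)$ and $z \in F(b,s)$, and by symmetry it suffices to do the former. By~\eqref{30} applied to $F(x,\epsilon)$, after a unitary conjugation we may assume $z \in D(x,\epsilon)$, hence $z \in D_{\ell n}(x^{(\ell)},\epsilon)$ for some $\ell$, so $\|z_i - (x^{(\ell)})_i\| < \epsilon$ for all $i$. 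The point is that $z^{(k)}$ (direct sum of $k$ copies of $z$) is then within $\epsilon$ of $x^{(\ell k)}$ in each coordinate, while $x^{(\ell k)}$ is unitarily conjugate — via $u^{(\ell k)}$, the direct sum of $\ell k$ copies of $u$ — to $(x')^{(\ell k)}$, which sits in $D_{\ell k n}\big((a^{(k)})^{(\ell k)},\,\max_i\|x'_i - (a^{(k)})_i\|\big) = D_{\ell k n}(a^{(\ell k^2)}, \cdot)$. Unitary conjugation by $u^{(\ell k)}$ moves the $\epsilon$-ball around $x^{(\ell k)}$ to the $\epsilon$-ball around $(x')^{(\ell k)}$ (the operator norm is unitarily invariant, coordinate by coordinate), and the triangle inequality then gives that the conjugate of $z^{(k)}$ lies within $\max_i\|x'_i-(a^{(k)})_i\| + \epsilon \le r$ of $a^{(\ell k^2)}$ in each coordinate — strictly less than $r$, since one of the two summands defining $\epsilon$ is $<$ and the bound on $\|x'_i - (a^{(k)})_i\|$ is also strict, but in any case I can build in a strict inequality by shrinking $\epsilon$ slightly. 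Hence a suitable direct sum of copies of $z$, after a unitary conjugation, lies in $D(a,r)$, so $z \in F(a,r)$ by~\eqref{30}.

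The one bookkeeping subtlety — and what I expect to be the main (mild) obstacle — is matching up the multiplicities: $x$ lives at size $n$, but its two "witnesses" $x', x''$ live at sizes $kn$ and $k'n$, and $z$ lives at size $\ell n$, so to compare $z$ against $x'$ and $x''$ simultaneously one must pass to common direct-sum multiples and check that all the relevant conjugating unitaries (direct sums of copies of $u$, resp. $v$) have matching sizes. None of this is deep, since~\eqref{10}, \eqref{20} and the unitary-invariance of the operator norm make every step routine; it is purely a matter of organizing the indices so that $z^{(\cdot)}$, $x^{(\cdot)}$, $(x')^{(\cdot)}$ and $a^{(\cdot)}$ all have the same size before invoking the triangle inequality. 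Everything else follows from the definitions~\eqref{10}--\eqref{30} of $D_n$, $D$ and $F$.
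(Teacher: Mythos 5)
Your proof takes the same approach as the paper: choose $\epsilon$ as the minimum of the two gaps $r - \|x' - a^{(k)}\|$ and $s - \|x'' - b^{(l)}\|$, then chase $z$ through the chain $z \approx x^{(\ell)} \sim (x')^{(\ell)} \approx a^{(k\ell)}$ using the triangle inequality and the unitary invariance of the operator norm, exactly as the paper does. One small simplification: the extra direct sum $z^{(k)}$ is a detour --- the paper works directly with $z$, $x^{(\ell)}$, and $a^{(k\ell)}$, which already all have the common size $\ell k n$ (with $n$ the size of $a$) --- and dropping it also repairs your subscript slip, where $D_{\ell k n}(a^{(\ell k^2)},\cdot)$ should read $D_{\ell k^2 n}(a^{(\ell k^2)},\cdot)$.
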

\begin{proof}
Choose $k,l$ and $u,v$ so that
\[
\norm{x -u^{-1}a^{(k)}u} < r\ \ \text{ and }\ \ \norm{x -v^{-1}b^{(l)}v} < s
\]
and define $\epsilon \in \r^+$ by
\[
\epsilon = \min \big\{r-\norm{x -u^{-1}a^{(k)}u} ,\ s-\norm{x -v^{-1}b^{(l)}v} \big\}.
\]

We claim that $F(x,\epsilon) \subseteq F(a,r) \cap F(b,s)$. To prove this claim, fix $y \in F(x,\epsilon)$. By the definition of $F(x,\epsilon)$ there exist $m \in \n$ and a unitary $w$ such that
\be\label{40}
\norm{w^{-1}yw- x^{(m)}} < \epsilon.
\ee
By the definition of $\epsilon$,  $\norm{x -u^{-1}a^{(k)}u} \le r-\epsilon$ so that
\be\label{50}
\norm{x^{(m)} -(u^{(m)})^{-1}a^{(km)}u^{(m)}}  \le  r -\epsilon.
\ee
As \eqref{40} and \eqref{50} imply that $\norm{w^{-1}yw - (u^{(m)})^{-1}a^{(km)}u^{(m)}} < r$ which in turn implies that $y \in F(a,r)$. A similar argument implies that $y \in F(b,s)$.
\end{proof}
Lemma \ref{lem20} guarantees that the sets of the form $D(a,r)$ with $a \in \m^d$ and $r \in \r^+$ form a  basis for a topology on $\m^d$. We refer to this topology as the \emph{fat} topology.

\subsection{The free topology}

The third example of an admissible topology is the 
 {\em free topology}.
A {\em basic free open set } in $\md$ is a set of the form
\[
\gdel \= 
\{ x \in \md : \| \d(x) \| < 1 \},
\]
where $\delta$ is a $J$-by-$J$ matrix with entries in $\pd$.
We define the free topology to be the topology on $\md$ which has as a basis all the sets $\gdel$,
as $J$ ranges over the positive integers, and the entries of $\delta$ range over all polynomials in $\pd$.
(Notice that $ G_{\delta_1} \cap G_{\delta_2} = G_{\delta_1 \oplus \delta_2}$, so these sets do form the basis of a topology). The free topology is a natural topology  when considering semi-algebraic sets.

\begin{prop}\label{prop10}
The fat topology is an admissible topology, finer than the free topology and coarser than the fine topology.
\end{prop}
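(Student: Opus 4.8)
The plan is to verify the three separate assertions in turn: that the fat topology is admissible, that it is finer than the free topology, and that it is coarser than the fine topology. Admissibility is essentially bookkeeping already set up above: Lemma~\ref{lem10} says each basic fat set $F(a,r)$ is an nc domain, and Lemma~\ref{lem20} (together with the observation that the sets $D(a,r)$ generate the same topology as the sets $F(a,r)$) says these form a basis. Each $F(a,r)$ is bounded because $\|x\|$ is bounded on $D(a,r)$ and unitary conjugation preserves the norm, so one has a basis of bounded nc domains. Coarseness than the fine topology is immediate: the fine topology has \emph{all} nc domains as a basis, and every $F(a,r)$ is an nc domain by Lemma~\ref{lem10}, so every fat-open set is fine-open.

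The substantive part is showing the fat topology is finer than the free topology, i.e.\ that every basic free open set $\gdel = \{x : \|\delta(x)\| < 1\}$ is fat-open. So fix such a $\delta$ ($J$-by-$J$ with polynomial entries) and a point $a \in \gdel \cap \mn^d$; I want to produce $r > 0$ with $F(a,r) \subseteq \gdel$. Since $\|\delta(a)\| < 1$, pick $\eta$ with $\|\delta(a)\| < \eta < 1$. The key point is that $\delta$, being a matrix of polynomials, is a free polynomial map and hence compatible with direct sums and unitary conjugation: $\delta(x^{(k)}) = \delta(x)^{(k)}$ and $\delta(u^{-1}xu) = u^{-1}\delta(x)u$ (the latter after identifying $u$ with $u \otimes I_J$ appropriately), so $\|\delta(\cdot)\|$ is constant on the direct-sum / unitary-conjugation orbit of $a$; in particular $\|\delta\| < \eta$ at every point of the orbit $\{u^{-1}a^{(k)}u\}$. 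It therefore suffices to choose $r$ so that $\|\delta(x)\| < 1$ whenever $x$ lies within $r$ (entrywise, in operator norm) of some orbit point. Because the entries of $\delta$ are polynomials, on the bounded set of $x$ with $\|x\| \le \|a\| + 1$ the map $x \mapsto \delta(x)$ is Lipschitz in the operator norm with some constant $L$ depending only on $\delta$ and $\|a\|$ — crucially \emph{not} on the matrix size $n$, since a polynomial identity gives a size-independent Lipschitz bound via the triangle inequality and submultiplicativity applied monomial by monomial. Hence take $r = \min\{1, (1-\eta)/L\}$. Then for $y \in F(a,r)$, writing $\|w^{-1}yw - x^{(m)}\| < r$ with $x$ an orbit point of $a$, one gets $\|\delta(y)\| = \|\delta(w^{-1}yw)\| \le \|\delta(x^{(m)})\| + L r = \|\delta(x)\| + Lr < \eta + (1-\eta) = 1$, so $y \in \gdel$. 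This shows $F(a,r) \subseteq \gdel$, completing the proof that each $\gdel$ is fat-open.

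The main obstacle — and the place that needs the most care — is the size-independence of the Lipschitz estimate for the polynomial map $\delta$ on a norm-bounded region. Everything else is direct-sum/unitary-conjugation bookkeeping already licensed by Lemmas~\ref{lem10} and~\ref{lem20}. For the Lipschitz bound one argues monomial by monomial: for a monomial $x_{i_1}\cdots x_{i_\ell}$, the difference $x_{i_1}\cdots x_{i_\ell} - y_{i_1}\cdots y_{i_\ell}$ telescopes into $\ell$ terms each of the form (product of $x$'s)$(x_{i_j} - y_{i_j})$(product of $y$'s), each bounded in norm by $(\|a\|+1)^{\ell-1}\|x_{i_j}-y_{i_j}\|$, and these bounds involve only the degree and the norm bound, never $n$. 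Summing over the finitely many monomials in the finitely many entries of $\delta$ gives $L = L(\delta, \|a\|)$, uniform in $n$, which is exactly what the definition of $F(a,r)$ (a union over all sizes $kn$ and all unitaries) requires. I would also remark in passing that admissibility of the free topology itself is part of the claim's content, but it follows from the same orbit-invariance of $\delta$: each $\gdel$ is closed under direct sums and unitary conjugation, and d.u.-open because $\|\delta(\cdot)\|$ is d.u.-continuous, so $\gdel$ is an nc domain, and these are bounded exactly when $\delta$ is, e.g., of the form used here — though for the present proposition one only needs that the free topology is coarser than the fat one, which is what the inclusion $F(a,r) \subseteq \gdel$ delivers.
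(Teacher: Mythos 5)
Your proof is correct and follows the same route as the paper: the paper's entire proof of the finer-than-free assertion is the one-line remark that ``this is obvious, because $\delta$ is a finite matrix of free polynomials,'' and your dimension-independent Lipschitz estimate (telescoping monomials on the norm-bounded set, uniform in the matrix size $n$) is precisely the content that remark is gesturing at. The admissibility and coarser-than-fine parts are likewise handled exactly as the paper intends, via Lemmas~\ref{lem10} and~\ref{lem20}.
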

\bp
All that needs to be shown is that for any $\gdel$ and any $x \in \gdel$, there is a fat neighborhood
of $x$ in $\gdel$. But this is obvious, because $\delta$ is a finite matrix of free polynomials.
\ep

\section{Hessians}
\label{secd}

Let $f$ be an nc function defined on a d.u. open set $U \subseteq \md$, and let $a \in U$.
We define the \emph{Hessian of f at a} to be the bilinear form $Hf(a)$ defined on $\m^d \times \m^d$ by the formula
\[
Hf(a)[h,k] =  \lim_{t \to 0} \frac{Df(a +tk)[h] - Df(a)[h]}{t}, \qquad h,k \in \m^d.
\]
If $A \subseteq \m^d$ and $B \subseteq \m^b$ we define $A\bkt B \subseteq \m^{d+b}$ by
\[
A \bkt B = \bigcup_{n=1}^\infty\  (A \cap \m_n^d) \times (B \cap \m_n^b).
\]
If $\tau$ is a topology on $\m^d$ and $\sigma$ is a topology on $\m^b$, then we let $\tau \bkt \sigma$ be the topology on $\m^{d+b}$ that has a basis
\[
\tau \bkt \sigma = \bigcup \set{A \bkt B}{A \in \tau, B \in \sigma}.
\]
If $\tau$ and $\sigma$ are admissible, then $\tau \bkt \sigma$ is admissible.
\begin{lem}\label{lemd1}
Let $\tau$ be an admissible topology on $\m^d$ and assume that $f:\Omega \to \m^1$ is a $\tau$ holomorphic function. If $\sigma$ is any admissible topology, then $g$ defined on $\Omega \bkt \m^d$ by the formula
\[
g(x,h) = Df(x)[h],\qquad (x,h) \in \Omega \bkt \m^d,
\]
is a $\tau \bkt \sigma $ holomorphic function. Furthermore, for each fixed $n \in \n$ and $x \in \Omega \cap \m_n^d$, $g(x,h)$ is a bounded linear map from $\m_n^d$ to $\m_n^1$.
\end{lem}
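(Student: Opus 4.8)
The plan is to verify the two defining conditions for $g(x,h)=Df(x)[h]$ on $\Omega\bkt\m^d$ to be $\tau\bkt\sigma$ holomorphic — that $g$ is an nc-function, and that $g$ is $\tau\bkt\sigma$ locally bounded — noting first that $\Omega\bkt\m^d$ is $\tau\bkt\sigma$-open (it is $\Omega\bkt\m^d$ with $\Omega\in\tau$, $\m^d\in\sigma$) and that $g$ is graded because $Df(x)[h]\in\m_m^1$ whenever $x,h\in\m_m^d$. To check the nc-function axioms: if $(x_1,h_1),(x_2,h_2)$ and their direct sum $(x_1\oplus x_2,h_1\oplus h_2)$ all lie in $\Omega\bkt\m^d$, then $x_1,x_2,x_1\oplus x_2\in\Omega$, so — since $\Omega$ is $\tau$-open, hence d.u.\ open — the points $x_i+th_i$ and $(x_1\oplus x_2)+t(h_1\oplus h_2)=(x_1+th_1)\oplus(x_2+th_2)$ all lie in $\Omega$ for every sufficiently small $t$. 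Applying Definition~\ref{defa1}(i) at these points, forming the difference quotient, and letting $t\to0$ via \eqref{eqa1} yields $g((x_1,h_1)\oplus(x_2,h_2))=g(x_1,h_1)\oplus g(x_2,h_2)$. Invariance of $g$ under simultaneous conjugation of $(x,h)$ by an invertible $s$ is established in exactly the same way, now using Definition~\ref{defa1}(ii) at the points $x+th$ and $s^{-1}(x+th)s$. Finally, the last sentence of the statement is immediate: $h\mapsto Df(x)[h]$ is homogeneous directly from \eqref{eqa1} and additive because $f$ is differentiable (\cite{amfree}), hence is a linear map between the finite-dimensional spaces $\m_n^d$ and $\m_n^1$, and therefore bounded.

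For the local boundedness, fix $(a,h_0)\in\Omega\bkt\m^d$ with $a\in\m_n^d$. Since $f$ is $\tau$ locally bounded, choose a basic $\tau$-open set — a bounded nc domain — $V$ with $a\in V\subseteq\Omega$ and $\|f\|\le M$ on $V$. By Lemma~\ref{lembb2}, whenever $\begin{bmatrix}x&k\\0&x\end{bmatrix}\in V$ one has $\|Df(x)[k]\|\le\|f(\begin{bmatrix}x&k\\0&x\end{bmatrix})\|\le M$, so by homogeneity of $Df(x)$: if $\begin{bmatrix}x&k\\0&x\end{bmatrix}\in V$ for every $k$ with $\|k\|<\rho$, then $\|Df(x)\|\le 2M/\rho$. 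It therefore suffices to produce a $\tau$-open $W$ with $a\in W\subseteq\Omega$, together with $\rho>0$, such that $\begin{bmatrix}x&k\\0&x\end{bmatrix}\in V$ for all $x\in W$ and all $k$ with $\|k\|<\rho$. Granting this, choose a basic $\sigma$-open set $B\ni h_0$ with $\|h\|\le H$ on $B$; then $W\bkt B$ is a $\tau\bkt\sigma$-open neighbourhood of $(a,h_0)$ contained in $\Omega\bkt\m^d$, and there $\|g(x,h)\|=\|Df(x)[h]\|\le(2M/\rho)\|h\|\le 2MH/\rho$, as required.

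The production of $W$ and $\rho$ is the main obstacle. The relevant feature of $V$ is that, being a bounded nc domain, it is closed under direct sums and unitary conjugations; hence $x\oplus x\in V$ for $x\in V$, and since $\begin{bmatrix}x&k\\0&x\end{bmatrix}$ differs from $x\oplus x$ by $k$ in norm, what is needed is a d.u.\ ball of one \emph{fixed} radius $\rho$ around every $x\oplus x$ with $x\in W$ — equivalently, around the direct-sum powers $a^{(j)}$ for all $j$ — sitting inside $V$. This uniform control over all matrix sizes is the substantive content: a priori the d.u.\ ``room'' inside a bounded nc domain near $a^{(j)}$ could degenerate as $j\to\infty$, and then $Df$ could fail to be bounded on any $\tau$-neighbourhood of $a$. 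One obtains the uniformity from the theory of locally bounded nc functions in \cite{amfree}: the local boundedness of $f$ propagates with estimates independent of matrix size, precisely because the nc identities of Lemmas~\ref{lembb1}--\ref{lembb3} tie the various sizes together, so that on a suitable $\tau$-neighbourhood $W$ of $a$ the ``doubled'' points $\begin{bmatrix}x&k\\0&x\end{bmatrix}$ remain in a region where $f$ is bounded. Once $W$ and $\rho$ are in hand the rest is bookkeeping of constants.
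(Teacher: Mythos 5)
The paper states Lemma~\ref{lemd1} without proof, so there is no argument of the authors' to compare against; I evaluate your proposal on its own terms. The routine parts are handled correctly: that $g$ is a graded nc-function follows exactly as you say, by taking difference quotients in the two axioms of Definition~\ref{defa1} (using that $\Omega$ is d.u.\ open to make sense of $x+th$, etc.), and the final sentence of the lemma is indeed immediate from linearity of $Df(x)$ plus finite-dimensionality of $\m_n^d$ and $\m_n^1$.

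The gap is exactly where you flag the ``main obstacle,'' and you do not close it. You correctly reduce local boundedness of $g$ to the claim that there exist a $\tau$-open $W\ni a$ and a single $\rho>0$ such that $\begin{bmatrix}x&k\\0&x\end{bmatrix}\in V$ for all $x\in W$ and all $\|k\|<\rho$; but you then assert, without a specific citation or argument, that this ``uniformity'' comes out of \cite{amfree} ``precisely because the nc identities of Lemmas~\ref{lembb1}--\ref{lembb3} tie the various sizes together.'' Those lemmas describe how $f$ acts on block upper-triangular inputs; they say nothing about the geometry of the bounded nc domain $V$, and in particular do not by themselves produce a d.u.\ thickening of $\{x\oplus x : x\in W\}$ of uniform radius inside $V$. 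Your remark that this is ``equivalently'' a question about room around $a^{(j)}$ is also not accurate: you need room of a fixed radius around $x\oplus x$ for \emph{every} $x$ in a $\tau$-neighborhood of $a$, which is strictly more. For the fat topology (the case the paper actually uses, via Lemma~\ref{leme1}) the claim is elementary and should be said explicitly: take $V=F(a,s)$ and $W=F(a,s/2)$; if $u$ is a unitary with $\|u^{-1}xu-a^{(j)}\|<s/2$, then $(u\oplus u)^{-1}\begin{bmatrix}x&k\\0&x\end{bmatrix}(u\oplus u)$ lies within $s/2+\|k\|$ of $a^{(2j)}$, so $\rho=s/2$ works. For the fine topology, or an arbitrary admissible $\tau$, the required uniformity is a real theorem about locally bounded nc functions (uniform local boundedness independent of matrix size, as in \cite{amfree} or \cite{kvv12}) and needs either a precise reference or a proof; as written your argument locates the crux but does not resolve it.
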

\begin{lem}\label{lemd2}
Let $\Omega \subseteq \m^d$ be a fine domain, $f:\Omega \to \m^1$ a fine holomorphic function and $a \in \Omega$. If $h$ and $k$ are sufficiently small, then
\[
f(\begin{bmatrix}
a&k&h&0\\
0&a&0&h\\
0&0&a&k\\
0&0&0&a
\end{bmatrix}) =
\begin{bmatrix}
 f(a)&Df(a)[k]&Df(a)[h]&Hf(a)[h,k]\\
 0&f(a)&0&Df(a)[h]\\
 0&0&f(a)&Df(a)[k]\\
 0&0&0&f(a)
 \end{bmatrix}.
\]
\end{lem}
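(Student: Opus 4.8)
The plan is to realize the $4\times 4$ block matrix as a nested direct-sum/similarity perturbation built from the $2\times 2$ and $3\times 3$ cases, and then apply Lemma~\ref{lembb2} (together with the definition of the Hessian) twice. The key observation is that the displayed $4\times 4$ matrix can be viewed as $\begin{bmatrix} A & H \\ 0 & A \end{bmatrix}$, where $A$ itself is the $2\times 2$ block matrix $\begin{bmatrix} a & k \\ 0 & a\end{bmatrix}$ (appearing in positions $(1,1)$--$(2,2)$ and $(3,3)$--$(4,4)$) and $H$ is the off-diagonal $2\times 2$-block $\begin{bmatrix} h & 0 \\ 0 & h\end{bmatrix}$. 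The catch is that this only works verbatim if the cross terms match: the $(1,3)$ entry is $Df(a)[h]$, the $(1,4)$ entry is $Hf(a)[h,k]$, etc. So the first thing I would do is verify that the $4\times 4$ matrix in question equals $\begin{bmatrix} A & H \\ 0 & A\end{bmatrix}$ on the nose with $A = \begin{bmatrix} a & k \\ 0 & a\end{bmatrix}$ (identifying $\m_n^d$-tuples entrywise), so that Lemma~\ref{lembb2} applied at the point $A \in \m_{2n}^d$ with direction $H$ gives
\[
f\!\left(\begin{bmatrix} A & H \\ 0 & A\end{bmatrix}\right) \= \begin{bmatrix} f(A) & Df(A)[H] \\ 0 & f(A)\end{bmatrix}.
\]

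**Next**, I would compute $f(A)$. Since $A = \begin{bmatrix} a & k \\ 0 & a\end{bmatrix}$, a second application of Lemma~\ref{lembb2} (at $a$, direction $k$) gives $f(A) = \begin{bmatrix} f(a) & Df(a)[k] \\ 0 & f(a)\end{bmatrix}$, which produces the required diagonal $2\times 2$ blocks of the claimed answer. The remaining task is to identify the upper-right block $Df(A)[H]$ with $\begin{bmatrix} Df(a)[h] & Hf(a)[h,k] \\ 0 & Df(a)[h]\end{bmatrix}$. Here I would use the definition of the derivative directly: $Df(A)[H] = \lim_{t\to 0} \frac1t\big(f(A+tH) - f(A)\big)$, and $A + tH = \begin{bmatrix} a & k \\ 0 & a\end{bmatrix} + t\begin{bmatrix} h & 0 \\ 0 & h\end{bmatrix}$. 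Now $A + tH$ is again a $2\times 2$ block matrix with equal diagonal blocks $a + th$ and off-diagonal block $k$, so Lemma~\ref{lembb2} (this time at the point $a+th$, direction $k$) gives $f(A+tH) = \begin{bmatrix} f(a+th) & Df(a+th)[k] \\ 0 & f(a+th)\end{bmatrix}$.

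**Subtracting** $f(A)$ and dividing by $t$, the diagonal entries converge to $Df(a)[h]$ by definition of the derivative, and the off-diagonal entry is $\frac1t\big(Df(a+th)[k] - Df(a)[k]\big)$, which converges to $Hf(a)[h,k]$ by the very definition of the Hessian given at the start of Section~\ref{secd}. Assembling the $f(A)$ computation and the $Df(A)[H]$ computation into the $2\times 2$-block form $\begin{bmatrix} f(A) & Df(A)[H] \\ 0 & f(A)\end{bmatrix}$ and re-expanding each $2\times 2$ block into its $2\times 2$ sub-blocks yields exactly the $4\times 4$ matrix in the statement.

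**The main obstacle**, and the only genuinely non-trivial point, is justifying that all the matrices involved actually lie in the (fine) domain $\Omega$ on which $f$ is defined, so that the hypotheses of Lemma~\ref{lembb2} are met — in particular that $A$, $A+tH$, and the $4\times 4$ matrix all lie in $\Omega$ for $h, k$ (and $t$) sufficiently small. Since $a \in \Omega$ and $\Omega$ is a fine domain (hence open in an admissible topology with a basis of bounded nc domains containing $a$, and closed under direct sums and unitary conjugation), the block matrices are small perturbations of $a^{(2)}$ or $a^{(4)}$, which lie in $\Omega$; one then invokes openness of $\Omega$ in the d.u. topology to conclude membership for $\|h\|, \|k\|$ small. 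This "sufficiently small" quantifier in the statement is precisely what absorbs this domain bookkeeping. A secondary, purely bookkeeping, subtlety is the interchange of the two limits $t \to 0$ (for $D f(A)[H]$) with the internal limit defining $Df(a+th)[k]$; but this is avoided entirely by the argument above, which never takes an iterated limit — it computes $f(A+tH)$ in closed form via Lemma~\ref{lembb2} first, and only then takes $t \to 0$, so the Hessian limit appears directly and no Fubini-type justification is needed.
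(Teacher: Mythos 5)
Your outer decomposition is identical to the paper's: both write the $4\times 4$ matrix as $\begin{bmatrix} A & H \\ 0 & A\end{bmatrix}$ with $A = \begin{bmatrix} a & k \\ 0 & a\end{bmatrix}$ and $H = \begin{bmatrix} h & 0 \\ 0 & h\end{bmatrix}$, and both apply Lemma~\ref{lembb2} twice to reduce everything to computing $Df(A)[H]$. Where you differ is the inner step: the paper introduces the auxiliary nc function $g(x,h) = Df(x)[h]$ on $\Omega \bkt \m^d$, invokes Lemma~\ref{lemd1} to certify that $g$ is fine holomorphic, and then applies Lemma~\ref{lembb2} to $g$ at the point $(a,h) \in \Omega \bkt \m^d$ with increment $(k,0)$ — which makes $Hf(a)[h,k]$ appear directly as $Dg(a,h)[(k,0)]$. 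You instead compute $Df(A)[H]$ straight from the limit quotient, which avoids Lemma~\ref{lemd1} and is arguably more elementary. Your remarks on domain bookkeeping are also sound.

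There is, however, one genuine slip. Your direct computation of the $(1,2)$ sub-entry of $Df(A)[H]$ gives
\[
\lim_{t\to 0}\tfrac{1}{t}\bigl(Df(a+th)[k] - Df(a)[k]\bigr),
\]
where the base point $a$ is perturbed in the direction $h$ and the derivative is taken in the direction $k$. But the paper defines
\[
Hf(a)[h,k] = \lim_{t \to 0} \tfrac{1}{t}\bigl(Df(a +tk)[h] - Df(a)[h]\bigr),
\]
i.e.\ the \emph{second} argument is the perturbation direction. So, reading off the definition literally, the limit you computed is $Hf(a)[k,h]$, not $Hf(a)[h,k]$, and your assertion that it ``converges to $Hf(a)[h,k]$ by the very definition of the Hessian'' is not accurate. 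The two are of course equal — the Hessian is a symmetric bilinear form, and indeed comparing your computation with the paper's is one way to prove it — but this symmetry is not established anywhere in the paper and is not a consequence of the definition alone, so you cannot simply invoke it. You should either note and justify $Hf(a)[h,k] = Hf(a)[k,h]$ (e.g.\ by the permutation similarity that swaps the middle two block rows and columns of the $4\times4$ matrix, which exchanges $h$ and $k$ while preserving unitary conjugation), or switch to the paper's route via the auxiliary function $g$, which sidesteps the issue by producing $Hf(a)[h,k]$ with the right argument order from the start.
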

\begin{proof}
Let
\[
X = \begin{bmatrix} a&k\\0&a\end{bmatrix}
\]
and
\[
H = \begin{bmatrix} h&0\\0&h\end{bmatrix}.
\]
Define a function $g(x,h)$ by
\[
g(x,h) = Df(x)[h], \quad (x,h) \in \Omega \bkt \m^d.
\]
By Lemma \ref{lemd1} $ g$ is a fine holomorphic function of $2d$ variables. Hence, by Lemma \ref{lembb2},
\begin{align*}
g(X,H)&=g(\begin{bmatrix} (a,h)&(k,0)\\0&(a,h)\end{bmatrix})\\
&=\begin{bmatrix} g(a,h)&Dg(a,h)[k,0]\\0&g(a,h)\end{bmatrix}.
\end{align*}
But
\begin{align*}
Dg(a,h)[k,0]&=\lim_{t \to 0} \frac{g(a+tk,h) - g(a,h)}{t}\\
&=\lim_{t \to 0} \frac{Df(a+tk)[h] - Df(a)[h]}{t}\\
&=Hf(a)[h,k].
\end{align*}
Therefore,
\[
g(X,H) = \begin{bmatrix} Df(a)[h]&Hf(a)[h,k]\\0&Df(a)[h]\end{bmatrix}.
\]
Using this last formula and Lemma \ref{lembb2} several times we have that
\begin{align*}
f(\begin{bmatrix}
a&k&h&0\\
0&a&0&h\\
0&0&a&k\\
0&0&0&a
\end{bmatrix}) &= f(\begin{bmatrix}
X&H\\
0&X
\end{bmatrix})\\
&=\begin{bmatrix}
f(X)&g(X,H)\\
0&f(X)
\end{bmatrix}\\
&= \begin{bmatrix}f(a)&Df(a)[k]&Df(a)[h]&Hf(a)[h,k]\\
 0&f(a)&0&Df(a)[h]\\
 0&0&f(a)&Df(a)[k]\\
 0&0&0&f(a)
 \end{bmatrix}.
\end{align*}
\end{proof}

\section{Extending non-singularity  to a fat neighborhood}
\label{sece}

\begin{lem}\label{leme1}
Suppose that $f:U\to \m^1$ is a fat holomorphic function. For each $a \in U$, there exists $r  \in \r^+$ such that $Hf$ is a uniformly bounded bilinear form on $F(a,r)$.
\end{lem}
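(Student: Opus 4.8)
The plan is to reduce the statement to a uniform bound on a single scalar-valued fat holomorphic function of $4d$ variables, evaluated on matrices obtained by the $4\times 4$ block construction of Lemma~\ref{lemd2}. First I would fix $a \in U$. Since $U$ is fat open, there is a basic fat neighborhood $D(a_0,\rho)$ of $a$ inside $U$; shrinking, we may as well prove the estimate on a set of the form $F(a,r)$. The key observation is that, by Lemma~\ref{lemd2}, for each $x \in U$ and all sufficiently small $h,k$ the value $Hf(x)[h,k]$ appears as the top-right entry of $f$ evaluated at the $4n\times 4n$ matrix
\[
T(x,h,k) \=
\begin{bmatrix}
x&k&h&0\\ 0&x&0&h\\ 0&0&x&k\\ 0&0&0&x
\end{bmatrix}.
\]
So if I can show that $T(x,h,k)$ stays inside $U$ (in the d.u.\ topology, which suffices since $f$ is d.u.\ locally bounded, hence fine holomorphic by Lemma~\ref{lemb1}) and that $f$ is bounded on the relevant set of such matrices, then $\|Hf(x)[h,k]\|$ is controlled, and by bilinearity and a rescaling argument $h\mapsto \varepsilon h$, $k \mapsto \varepsilon k$ the bound for small $h,k$ upgrades to a bound for all $h,k$ of norm $\le 1$, i.e.\ uniform boundedness of the bilinear form.

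The concrete steps: (1) Choose $r_0 \in \r^+$ with $F(a,r_0) \subseteq U$, using that $U$ is fat open (Proposition-type facts on $F(a,\cdot)$, Lemmas~\ref{lem10} and \ref{lem20}), and choose $M$ with $\|f\| \le M$ on $F(a,r_0)$ by fat local boundedness. (2) For $x \in F(a,r)$ with $r$ small (to be fixed) and $h,k$ small, estimate $\|T(x,h,k) - \big(\text{the corresponding block matrix built from }a\big)\|$: the diagonal contributes $\|x_i - a_i\|$-type terms and the off-diagonal contributes $\|h\|,\|k\|$. Because the block construction is norm-continuous in $(x,h,k)$ and the map $x \mapsto T(x,0,0)$ takes $F(a,r)$ into a set unitarily/$\oplus$-equivalent to points near $a^{(4)}$, one shows $T(x,h,k) \in D(\tilde a,r_0)$ for suitable $\tilde a$ (a permutation-similar copy of $a^{(4)}$) once $r$ and $\|h\|,\|k\|$ are small enough; here one uses that $D$ and $F$ are closed under unitary conjugation and direct sums and the triangle-inequality computations already done in Lemma~\ref{lem20}. (3) Conclude $\|Hf(x)[h,k]\| \le \|f(T(x,h,k))\| \le M$ for all $x \in F(a,r)$ and all $\|h\|,\|k\| \le \eta$ for some $\eta>0$. (4) Replace $(h,k)$ by $(th,tk)$: since $Hf(x)[\cdot,\cdot]$ is bilinear, $Hf(x)[h,k] = t^{-2}Hf(x)[th,tk]$, so for $\|h\|,\|k\|\le 1$ take $t=\eta$ to get $\|Hf(x)[h,k]\| \le M/\eta^2$, uniformly in $x \in F(a,r)$.

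The main obstacle I anticipate is step (2): verifying cleanly that the $4\times4$ block matrix $T(x,h,k)$ lands in the \emph{fat} basic neighborhood $F(a,r_0) \subseteq U$, rather than merely in some d.u.-open neighborhood. One has to track how the block/permutation structure interacts with the $\bigcup_{k} D_{kn}(a^{(k)},\cdot)$ and $\bigcup_u u^{-1}(\cdot)u$ definitions of $D(a,r)$ and $F(a,r)$, and argue that $T(x,h,k)$ is unitarily equivalent to a direct sum of small perturbations of copies of $a$. A subtlety is that the off-diagonal entries $h,k$ are genuinely "large" relative to the fat structure around $a^{(4)}$ unless first made small, which is exactly why the rescaling in step (4) is needed and why the uniform bound is only claimed for the \emph{bilinear form} (operator norm) and not pointwise for fixed large $h,k$. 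Everything else is routine given Lemmas~\ref{lemb1}, \ref{lembb2}, \ref{lemd1}, and \ref{lemd2}.
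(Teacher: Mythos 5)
Your proposal is correct and follows essentially the same route as the paper's proof: extract $s,\rho$ from fat local boundedness with $F(a,s)\subseteq U$, place the $4\times4$ block matrix of Lemma~\ref{lemd2} inside $F(a,s)$ via the triangle inequality for $x\in F(a,s/2)$ and $\|h\|,\|k\|$ small, read off $\|Hf(x)[h,k]\|\le\rho$ from the top-right block, and rescale by bilinearity. The step you flagged as a potential obstacle is handled in the paper exactly as you sketch (conjugating the block matrix by $u\oplus u\oplus u\oplus u$ reduces it to a small perturbation of $a^{(4m)}$), so your worry is unnecessary but your instinct about where the work lies is accurate.
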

\begin{proof}
Fix $a \in U$. Since $f$ is a fat holomorphic function, there exists $s,\rho \in \r^+$ such that $F(a,s) \subseteq U$, $f$ is a fine holomorphic function on $F(a,s)$, and
\[
\sup_{x \in F(a,s)} \norm{f(x)} \le \rho.
\]
Let $r = s/2$. If $x \in F(a,r)$, then by the triangle inequality if $\norm{h}, \norm{k} < r/2$, then
\[
\begin{bmatrix}
x&k&h&0\\
0&x&0&h\\
0&0&x&k\\
0&0&0&x
\end{bmatrix} \in F(a,s).
\]
Hence, by Lemma \ref{lemd2},
\[
\norm{Hf(x)[h,k]} \le \rho
\]
whenever  $x \in F(a,r)$ and $\norm{h}, \norm{k} < r/2$.
It follows that if $x \in F(a,r)$, then
\[
\norm{Hf(x)[h,k]} \le \frac{r^2 \rho}{2} \norm{h} \norm{k}
\]
for all $h$ and $k$.
\end{proof}

Now, let $\Omega \subseteq \m^d$ be a fine domain, $f:\Omega \to \m^1$ a fine holomorphic function and $a \in \Omega \cap \m_n^d$.
We set $L = Df(a)$. If $L$ is nonsingular (i.e. surjective), then for each $k\in \n$, ${\rm id}_k \otimes L = Df(a^{(k)})$ is nonsingular as well. Thus, if we set $L_k = {\rm id}_k \otimes L$, then for each $k$, $L_k$ has a right inverse, i.e., a bounded transformation $R:\m_{kn}^1 \to \m_{kn}^d$ such that $L_k R = 1$. 
\begin{defin}\label{defin10}
Let us agree to say that \emph{L is completely nonsingular} if
\[
 \sup_{k} \inf \set{\norm{R}}{R \text{ is a right inverse of }L_k} < \infty.
\]
If $L$ is completely nonsingular, we define $c(L)$ by
\[
c(L) =  \Big(\sup_{k} \inf \set{\norm{R}}{R \text{ is a right inverse of }L_k}\Big)^{-1}
\]
\end{defin}

\begin{lem}\label{lem49}
If $L : \m_n^d \to \m_n^1$ is linear and has a right inverse $R$, then $L$ is completely non-singular
and $c(L) \geq 1/(n \| R \|)$.
\end{lem}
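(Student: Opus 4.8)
The plan is to reduce the completely-nonsingular statement for $L$ on $\m_n^d$ to a uniform bound on the norms of right inverses of $L_k = \mathrm{id}_k \otimes L$, using the given right inverse $R$ of $L$ to manufacture right inverses of each $L_k$. The natural candidate is $R_k := \mathrm{id}_k \otimes R$, which satisfies $L_k R_k = (\mathrm{id}_k \otimes L)(\mathrm{id}_k \otimes R) = \mathrm{id}_k \otimes (LR) = \mathrm{id}_k \otimes 1 = 1$, so $R_k$ is indeed a right inverse of $L_k$ for every $k$. Thus $L$ is completely nonsingular provided $\sup_k \| \mathrm{id}_k \otimes R \| < \infty$, and $c(L) \ge 1 / \sup_k \|\mathrm{id}_k \otimes R\|$.

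The first substantive step is therefore to estimate $\|\mathrm{id}_k \otimes R\|$ as an operator from $\m_{kn}^1$ to $\m_{kn}^d$. Here I must be careful about which norms are in play: elements of $\m_{kn}^d$ are $d$-tuples of $kn$-by-$kn$ matrices with the $\max$ (or operator) norm on the tuple, and $\mathrm{id}_k \otimes L$ is the map induced on $kn$-by-$kn$ matrices $X$ (regarded, via the identification with $\m_k(\m_n)$, as $k$-by-$k$ block matrices with $n$-by-$n$ entries) by applying $L$ entrywise to the blocks. Tensoring a fixed finite-dimensional linear map $R : \m_n^1 \to \m_n^d$ with $\mathrm{id}_k$ does not in general preserve operator norm, but one gets a crude bound that is uniform in $k$: writing $R$ in coordinates relative to the standard matrix-unit bases, $\mathrm{id}_k \otimes R$ applied to a block matrix $X = \sum_{p,q} E_{pq} \otimes X_{pq}$ produces $\sum_{p,q} E_{pq} \otimes R(X_{pq})$, and using that each block satisfies $\|X_{pq}\| \le \|X\|$ together with the triangle inequality over the (at most $n^2$) matrix-unit components of each $X_{pq}$, one obtains $\|\mathrm{id}_k \otimes R\| \le n \|R\|$, where the factor $n$ absorbs the number of entries of an $n$-by-$n$ matrix (this is exactly the source of the $1/(n\|R\|)$ in the statement). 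I would carry this estimate out via the "row/column" trick: bound $\|\sum_{p,q} E_{pq}\otimes Y_{pq}\|$ by $\sum_{\text{matrix units }E_{ij}\text{ of }\m_n} \|\sum_{p,q} E_{pq} \otimes (Y_{pq})_{ij} E_{ij}\|$ and note each summand has norm $\max_{p,q}|(Y_{pq})_{ij}| \le \max_{p,q}\|Y_{pq}\|$, giving at most $n^2$ terms — then combine with $\|R(X_{pq})\|\le \|R\|\|X_{pq}\|$ in each coordinate, which yields the factor $n$ after being slightly more economical, or accept $n^2$ and note that the precise constant is immaterial for the applications.

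Once the uniform bound $\sup_k \|\mathrm{id}_k \otimes R\| \le n\|R\|$ is in hand, the conclusion is immediate from Definition~\ref{defin10}: for each $k$, the infimum of $\|R'\|$ over right inverses $R'$ of $L_k$ is at most $\|\mathrm{id}_k\otimes R\| \le n\|R\|$, hence $\sup_k \inf\{\ldots\} \le n\|R\| < \infty$, so $L$ is completely nonsingular and $c(L) = (\sup_k\inf\{\ldots\})^{-1} \ge 1/(n\|R\|)$.

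The main obstacle is genuinely just the norm bookkeeping in the middle step — specifically, pinning down that tensoring with $\mathrm{id}_k$ inflates the operator norm of a map on $d$-tuples of $n$-by-$n$ matrices by no more than a factor depending only on $n$ (not on $k$). There is no conceptual difficulty: the key point is that $R$ lives on the fixed finite-dimensional space $\m_n^1$, so $\mathrm{id}_k\otimes R$ acts "locally on blocks," and any reasonable crude estimate that is visibly independent of $k$ suffices. I would not try to optimize the constant beyond getting it into the form $n\|R\|$, since that is all the lemma claims and all later results need.
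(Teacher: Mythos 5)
Your overall strategy is the same as the paper's: take $R_k = {\rm id}_k \otimes R$ as a right inverse of $L_k = {\rm id}_k \otimes L$, and then reduce everything to a uniform-in-$k$ bound on $\| {\rm id}_k \otimes R\|$. That first step is correct and matches the paper exactly. The problem is the second step, which you describe as ``just norm bookkeeping'' but which is actually the entire content of the lemma, and your sketch of it contains a genuine error.

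Specifically, after expanding $({\rm id}_k \otimes R)(X) = \sum_{p,q} E_{pq}\otimes R(X_{pq})$ and then decomposing over the matrix units $E_{ij}$ of $\mn$, you are left with terms of the form $\bigl\| \sum_{p,q} c_{pq}\, E_{pq} \otimes E_{ij} \bigr\| = \bigl\| \sum_{p,q} c_{pq}\, E_{pq} \bigr\|$, where $c_{pq} = (R(X_{pq}))_{ij}$. You assert this equals $\max_{p,q} |c_{pq}|$. But $\sum_{p,q} c_{pq}\,E_{pq}$ is a general $k\times k$ matrix, not a single matrix unit, and its operator norm is not the maximum of the entries: the all-ones $k\times k$ matrix has norm $k$. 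So your bound is not independent of $k$, and the argument does not close. This is not a matter of being ``slightly more economical'' or settling for $n^2$ instead of $n$ --- the $k$-dependence is the whole obstruction. Indeed, tensoring a bounded linear map with ${\rm id}_k$ can inflate operator norms without bound in $k$ in general (the transpose map on $\mn$ has norm $1$ but $\|{\rm id}_n\otimes {\rm transpose}\| = n$), so no bookkeeping argument that treats $R$ as an arbitrary linear map can succeed. What saves the day is that $R$ has range in $\m_n^d$ with $n$ fixed: a linear map into $\mn$ is automatically completely bounded with $\|T\|_{cb} = \|{\rm id}_n\otimes T\| \leq n\|T\|$. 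This is precisely the theorem of R.~Smith that the paper cites (\cite{sm83}; \cite[Prop.~8.11]{pau02}), and it is a real theorem, not a triangle-inequality computation. Your proof should cite or prove this result rather than treating the estimate as routine.
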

\begin{proof}
Note that ${\rm id}_k \otimes R$ is a right inverse of ${\rm id}_k \otimes L$.
Therefore $c(L)$ is at least the reciprocal of 
\[
\| R \|_{cb} \ := \ \sup_k \|{\rm id}_k \otimes R \|.
\]
By a result of R.~Smith \cite{sm83}; \cite[Prop 8.11]{pau02}, any linear operator $T$ defined on an operator space and 
with range $\m_n$ has 
$\| T \|_{cb} = \| {\rm id}_n \otimes T\| \leq n \| T \|$.
But $R$ is just a $d$-tuple of linear operators from $\m_n$ to $\m_n$, 
so $\| R \|_{cb} \leq n \|R \|$.
\end{proof}

\begin{lem}\label{leme2}
If $L$ is completely nonsingular, $k \in \n$, $E:\m_{kn}^d \to \m_{kn}$ is linear, and $\norm{E} < c(L)$, then $L_k+E$ is nonsingular.
\end{lem}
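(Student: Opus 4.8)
The statement to prove is: if $L : \m_n^d \to \m_n$ is completely nonsingular, $k \in \n$, $E : \m_{kn}^d \to \m_{kn}$ is linear with $\norm{E} < c(L)$, then $L_k + E$ is nonsingular (i.e.\ surjective). The plan is the standard Neumann-series perturbation argument adapted to the right-inverse formulation in Definition \ref{defin10}. First I would unwind the definition: since $L$ is completely nonsingular with constant $c(L)$, for the fixed $k$ we have $\inf\set{\norm{R}}{R \text{ a right inverse of } L_k} \le 1/c(L)$, so for any $\varepsilon>0$ there is a right inverse $R$ of $L_k$ with $\norm{R} < 1/c(L) + \varepsilon$; because $\norm{E} < c(L)$ is a strict inequality, I can choose $\varepsilon$ small enough that $\norm{E}\,\norm{R} < 1$. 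Fix such an $R$.

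Next, observe that surjectivity of $L_k + E$ onto $\m_{kn}$ will follow if I can produce a right inverse for it. Formally set $S = R(\idd + ER)^{-1}$, where the inverse exists as a bounded operator on $\m_{kn}$ by the Neumann series $\sum_{j\ge 0}(-ER)^j$, which converges in operator norm since $\norm{ER} \le \norm{E}\,\norm{R} < 1$. Then compute
\[
(L_k + E) S \= (L_k + E) R (\idd + ER)^{-1} \= (L_k R + ER)(\idd+ER)^{-1} \= (\idd + ER)(\idd+ER)^{-1} \= \idd,
\]
using $L_k R = \idd$. Hence $L_k + E$ has a bounded right inverse $S$, so it is surjective, which is the meaning of "nonsingular" in this paper. (One should note $R$ maps into $\m_{kn}^d$ and $L_k+E$ maps $\m_{kn}^d\to\m_{kn}$, so the composition $(L_k+E)S$ is the identity on $\m_{kn}$ and $S\colon \m_{kn}\to\m_{kn}^d$, consistent with the domains.)

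There is essentially no obstacle here; the only point requiring a moment's care is the passage from the $\sup_k\inf$ in Definition \ref{defin10} to a usable bound for the particular $k$ in the hypothesis — but since $\inf_R \norm{R} \le \sup_k \inf_R\norm{R} = 1/c(L)$ for every $k$, and the hypothesis $\norm{E}<c(L)$ is strict, we can select $R$ with $\norm{R}$ close enough to $1/c(L)$ that $\norm{E}\,\norm{R}<1$. The rest is the convergence of the Neumann series and the algebraic identity above. I would present it in exactly that order: choose $R$, form the Neumann series, verify $(L_k+E)S = \idd$, conclude surjectivity.
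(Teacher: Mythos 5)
Your proof is correct and takes essentially the same approach as the paper: choose a right inverse $R$ of $L_k$ with norm controlled by $1/c(L)$, observe $\norm{ER}<1$ so $\idd+ER$ is invertible by the Neumann series, and verify that $R(\idd+ER)^{-1}$ is a right inverse of $L_k+E$. The only cosmetic difference is that you handle the infimum in Definition~\ref{defin10} by choosing $R$ with $\norm{R}<1/c(L)+\varepsilon$ and using the strictness of $\norm{E}<c(L)$, whereas the paper simply takes $\norm{R}\le c(L)^{-1}$ (which is justified here since the infimum over the closed affine set of right inverses in a finite-dimensional space is attained, but your more cautious phrasing is equally valid).
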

\begin{proof}Assume that $L$ is completely nonsingular, $k \in \n$, $E:\m_{kn}^d \to \m_{kn}$, and $\norm{E} < c(L)$. Choose $R:\m_{kn}^1 \to \m_{kn}^d$ satisfying $L_kR = 1$ and $\norm{R} \le c(L) ^{-1}$.

If $\norm{E} < c(L)$, then $\norm{ER} < 1$ and as a consequence, $1+ER$ is invertible. But
\begin{align*}
(L_k+E)R(1+ER)^{-1} &= (L_kR +ER)(1+ER)^{-1}\\
&=(1+ER)(1+ER)^{-1}\\
&=1.
\end{align*}
Hence, if $\norm{E} < c(L)$, then $L_k+E$ is surjective.
\end{proof}
\bt\label{prope1}
Let $U \subseteq \m^d$ be a fat nc domain and assume that $f: U \to \m^\ell $ is a fat holomorphic function.
Let $a \in U \cap \mnd$.

(i)
If $Df(a)$ is  full rank, 
then there exists a fat domain $\Omega$ such that $a \in \Omega \subseteq   U$ and $Df(x)$ is full rank
 for all $x \in \Omega$.

(ii)
If $ \ell \leq d$ and $Df(a)$ is an isomorphism from \[
0^{d - \ell} \times \mn^\ell \ : = 
\{ (0, \dots, 0, h^{d - \ell + 1}, \dots, h^d) : h^r \in \mn, d- \ell + 1 \leq r \leq d \}
\]
onto $\mn^\ell$, then there is  a fat domain $\Omega$ such that $a \in \Omega \subseteq   U$ and $Df(x)$ is nonsingular on 
$ 0^{d - \ell} \times \m_\mu^{\ell}$ 
 for all $\mu \in \n$ and for all $x \in \Omega \cap \m_\mu^d$.
\et
\bp
Let $f = (f^1, \dots, f^\ell)^t$.
By Lemma \ref{leme1}, there exist $s, M \in \r^+$ such that, for each $1 \leq j \leq \ell$,
\[
\norm{Hf^j(x)[h,k]} \le M\norm{h}\norm{k}
\]
for all $x \in F(a,s)$ and all $h,k \in \m^d$ that have the same size as $x$. Choose $r \in \r^+$ satisfying
\[
r < \min \big\{s, \frac{c(Df(a))}{ M \sqrt{\ell} }\big\}.
\]

Let $m \in \n$ and $x \in F(a,r) \cap \m_{mn}^d$ (so that $\norm{x - a^{(m)}} < r$). We have that
for each $j$
\begin{align*}
\norm{Df^j(x)[h] - Df^j(a^{(m)})[h]}&=\norm{\int_0^1 \frac{d}{dt}Df^j\big(a^{(m)}+t(x-a^{(m)})\big)[h]dt}\\
&=\norm{\int_0^1 Hf^j\big(a^{(m)}+t(x-a^{(m)})\big)[h,x-a^{(m)}]dt}\\
&\le M \norm{h} \norm {x - a^{(m)}}\\
&< \frac{\,  c(Df(a))}{ \sqrt{\ell}  }  \norm{h}      .
\end{align*}
So
\[
\norm{Df(x) - Df(a^{(m)})} <  c(Df(a)).
\]
Hence, by Lemma \ref{leme2}, $Df(x)$ is nonsingular, proving (i).

Part (ii) follows in the same way, by considering $Df(x) |_{0^{d-\ell} \times \m_m^\ell}$.
By hypothesis, this has a right inverse at $a$, so by Lemma~\ref{lem49} is completely 
nonsingular. 
Therefore there is a fat neighborhood of $a$ (perhaps smaller than in case (i)) on which 
 $Df(x) |_{0^{d-\ell} \times \m_m^\ell}$ is nonsingular.
\ep

We can now prove a fat version of the inverse function theorem, Theorem~\ref{thmb3}.

\bt
\label{thme1}
Let  $\Omega \subseteq \md$ be a fat nc domain.
Let $\Phi$ be a fat holomorphic map on $\Omega$. Then the following are equivalent:

(i)  $\Phi$ is injective on $\Omega$.

(ii)
$D\Phi(a)$ is non-singular for every $a \in \Omega$.

(iii) The function $\Phi^{-1}$ exists and is  a  fat holomorphic  map.
\et
\bp
In light of Pascoe's Theorem \ref{thmb3}, all that remains to prove is that Asumption (ii) implies that $\Phi^{-1}$ is fat holomorphic.
Let $U = \Phi(\O)$, and let $b = \Phi(a) \in U \cap \mnd$.
We must find a fat neighborhood of $b$ on which $\Phi^{-1}$ is bounded.
This in turn will follow if we can find $r,s > 0$ such that
\be
\label{eqe5}
\Phi( D(a,r) ) \supseteq D(b,s),
\ee
 where $D(a,r)$ is defined in \eqref{20}.
By Lemma~\ref{leme1}, there exists $r_1 > 0, M$ such that the Hessian of
$f$ is bounded by $M$ on $F(a,r_1)$.
Choose $0 < r < r_1$ so that 
\[
M r \ < \ \frac{1}{2} c( D \Phi(a) ) ,
\]
and choose $ s > 0$ so that
\[
s \ < \ 
 \frac{r}{2} c( D \Phi(a) ) .
\]
We claim that with these choices, \eqref{eqe5} holds.

Indeed, choose $k \in \n$, and let $x \in D_{kn} (\ak, r) $.
Let us write $\a$ for $\ak$.
Then
\beq
\| \Phi (x) - \Phi(\a) \|
&\=&
\| \int_0^1 \frac{d}{dt} \Phi( \a + t (x - \a) ) dt \|
\\
&\=&
\| \int_0^1 D\Phi( \a + t (x - \a) )[x - \a] dt \|
\\
&\=&
\| D \Phi (\a) [ x - \a] + \\
&&
\int_0^1 D\Phi( \a + t (x - \a) )[x - \a] - D \Phi(\a) [ x - \a] dt \|
\\
&\geq&
\| D \Phi (\a) [ x - \a]  \| - M \| x - \a \|^2 \\
&\geq&
\left( c(D \Phi(a)) - M \| x - \a \| \right) \| x - \a \| \\
&\geq&
\frac{1}{2}  c(D \Phi(a)) \| x - \a \| .
\eeq
Since $D\Phi$ is non-singular, we have that $\Phi(D_{kn} (\ak, r) )$ is an open connected set, and
by the last inequality it contains $D_{kn}( b^{(k)} , s)$.
\ep

\section{The implicit function theorem}
\label{secc}

Let $f = (f_1 , \dots, f_k)$ be an
$\L(\c,\c^k)$ valued nc function.
We shall let $Z_f = \cap_{i=1}^k Z_{f_i}$ denote the zero set of $f$.
If $a \in \mnd$, the derivative of $f$ at $a$, $Df(a)$, is a linear map
from $\mnd$ to $\mnk$. We shall say that $Df(a)$ is of {\em full rank} if the rank of this
linear map is $kn^2$.

For convenience in the following theorem, we shall write 
$h$ in $\mnk$ as $h= (h^{d-k+1}, \dots, h^d)$.

\bt
\label{thmc1}
Let $U$ an nc domain.
Let $f$ be an
$\L(\c,\c^k)$ valued fine holomorphic function on $U$, for some $1 \leq k \leq d-1$.
Suppose
\begin{eqnarray}
\nonumber
\lefteqn{
\forall\, n \in \n,\
\forall \, a \in U \cap \mnd, 
}
 & \\
&\forall \, h 
\in \mnk \setminus \{ 0 \}, \quad
Df(a) [(0,\dots,0,h^{d-k+1},\dots,h^d)] \neq 0.
\label{eqc1}
\end{eqnarray}
Let $W$ be the projection onto the first $d-k$ coordinates of $Z_f \cap U$.
Then there is an $\L(\c,\c^k)$-valued fine holomorphic  function $g$ on $W$ such that
\[
Z_f \cap U \=
\{ (y,g(y)) \, : \, y \in W \} .
\]

Moreover, if $f$ is fat holomorphic, then $g$ can also be taken to be fat holomorphic.
\et
\bp
Let $\Phi(x) = (x^1, \dots, x^{d-k}, f(x) )^t$ be the nc map defined on $U$
by prepending the first $d-k$ coordinate functions.
By \eqref{eqc1}, $\Phi$ is non-singular on $U$, so by Theorem~\ref{thmb3},
there is an nc map from $U$ onto some  set $\Omega$, with inverse $\Psi$.

Let us write points $x$ in $\mnd$ as $(y,z)$, where $y \in \m_n^{d-k}$ and $z \in \mnk$.
Then  $y$ is in $W$ iff there is some $z$ such that $(y,z) \in U$ and $f(y,z) = 0$.

Let $\Psi = \psi_1 \oplus \psi_2$, where $\psi_1$ is $\Psi$ followed by projection onto the first
$d-k$ coordinates, and 
$\psi_2$ is $\Psi$ followed by projection onto the last
$k$ coordinates. Define $g(y) = \psi_2(y^1,\dots, y^{d-k}, 0, \dots, 0 )$.

If $(y,z) \in Z_f \cap U$, then $\Phi(y,z) = (y,0)$ and 
\[
\Psi \circ \Phi (y,z) \= (y, z) \= (\psi_1( y,0), g(y) ) ,
\]
so $z = g(y)$.

Conversely, if $y \in W$ and $z = g(y)$, then 
$\Psi(y,0) = (\psi_1(y,0), g(y) )$, so
\[
\Phi \circ \Psi (y, 0) \= (y,0) \= (\psi_1(y,0), f(\psi_1(y,0),g(y)) .
\]
Therefore $f(y,g(y)) = 0$.

Finally, if $f$ is fat holomorphic, then by Theorem~\ref{thmc1} the function $\Psi$ is fat holomorphic,
and hence so is $g$.
\ep

Two questions naturally arise. 
The first is whether satisfying \eqref{eqc1} at a particular point automatically
leads to it holding on a neighborhood. Theorem~\ref{prope1} shows that this is true in the
fat category.

The second question is whether whenever $Df(a)$ is of full rank,
one can change basis to obtain condition \eqref{eqc1}. We shall show in Corollary~\ref{corc1}
that the answer generically is yes.

%
%

\bd
\label{defbd1}
Let $ d \geq 2$.  We shall say that a $d$-tuple $x \in \mnd$ is {\em broad}
if \[
\{ p(x) : p \in \pd \} \= \mn .
\]
\ed

\bt
\label{thmc2}
Let $d \geq 2$, let $a \in \mnd$, and assume  $a$ is broad.
Let $N\leq (d-1)n^2 +1$.
Suppose $H_1, \dots, H_N \in \mnd$ are linearly independent modulo
$\{ a \Gamma - \Gamma a : \Gamma \in \mn \}$.
Then, for every $K_1, \dots, K_{N} \in \mn$ and for every $M \in \mn$ 
there exists $p \in \pd$ such that 
\be
\label{eqc2}
p(a) \= M,\quad {\rm and}\quad \ Dp(a)[H_i] = K_i, \forall\,  i \leq N .\ee
\et

\bp
We shall prove the theorem by induction on $N$. When $N=0$, the conclusion holds
because $a$ is broad. So assume that the theorem has been proved for some $ 0 \leq N \leq 
(d-1)n^2$, and we wish to show the conclusion holds for $N+1$. 
Fix $H_1, \dots, H_{N+1}$.
Assume that 
\be
\label{eqc2.1}
H_{N+1}
\ \notin \ 
\{ a \Gamma - \Gamma a: \Gamma \in \mn\} + \vee \{ H_1, \dots, H_N \} .
\ee

Let 
\[
I \= \{ p \in \pd : p(a) = 0,  Dp(a) [ H_i] = 0, i \leq N \} .
\]

Case 1: $N \geq 1$, and for all  $p \in I$, we have  $Dp(a)[H_{N+1}] = 0$.

If this holds, then by Lemma~\ref{lembb2} the map
\[
\pi: \ 
p(
\beginb
\beginb
a& H_1 \\
0 & a
\endb
&&0\\
&\ddots&\\
0&&
\beginb
a& H_N \\
0 & a
\endb
\endb
)
\
\mapsto\
p(
\beginb
a& H_{N+1} \\
0 & a
\endb
)
\]
is a well-defined homomorphism, as $p$ ranges over $\pd$.
By the inductive hypothesis and Lemma \ref{lembb2}, we have that for all
$K = (K_1,\dots, K_N)$, 
\[
\pi: \
\beginb
\beginb
M& K_1 \\
0 & M
\endb
&&0\\
&\ddots&\\
0&&
\beginb
M& K_N \\
0 & M
\endb
\endb
\ \mapsto \
\beginb
M& L(M,K) \\
0 & M
\endb
\]
for some linear map $L$.
Letting $K = 0$ and using the fact that $\pi$ is multiplicative, we get
\[
M_1 L(M_2, 0) + L(M_1,0) M_2 \= L(M_1M_2,0) .
\]
This means that the map $M \mapsto L(M,0)$ is a derivation on $\mn$, so it must
be inner \cite[Thm 3.22]{fd93}. Therefore there exists $\Gamma \in \mn$ such that
\be
\label{eqc3}
L(M,0) \= M \Gamma - \Gamma M .
\ee
As 
\be
\label{eqc3.1}
\beginb
\beginb
M& K_1 \\
0 & M
\endb
&&0\\
&\ddots&\\
0&&
\beginb
M& K_N \\
0 & M
\endb
\endb
\
\beginb
\beginb
0& K_1 \\
0 & 0
\endb
&&0\\
&\ddots&\\
0&&
\beginb
0& K_N \\
0 & 0
\endb
\endb
\ee
on the one hand maps to 
\[
\beginb
M& L(M,K) \\
0 & M
\endb
\
\beginb
0& L(0,K) \\
0 & 0
\endb
\]
and on the other to
\[
\beginb
0& L(0,MK) \\
0 & 0
\endb,
\]
we conclude
\be
\label{eqc4}
L(0,MK) \= M L(0,K) ,
\ee
and by reversing the factors in \eqref{eqc3.1} get
\be
\label{eqc5}
L(0,KM) \= L(0,K) M .
\ee
Let $E_i \in \mn^N$ have the identity in the $i^{\rm th}$ slot, and $0$ elsewhere.
By \eqref{eqc4} and \eqref{eqc5}, we have
$L(0,E_i)$ commutes with every matrix in $\mn$, so must be a scalar.
By linearity and \eqref{eqc4} again, we get that 
\be
\label{eqc6}
L(0,K) \= \sum_{i=1}^N c_i K_i .
\ee
As $\pi$ is linear, we have $L(M,K) = L(M,0) + L(0,K)$, so
combining  this observation  with \eqref{eqc3} and \eqref{eqc6}, we conclude that
\be
\label{eqc7}
L(M,K) \= M \Gamma - \Gamma M + \sum_{i=1}^N c_i K_i .
\ee
By Lemma \ref{lembb2}, this means
\be
\label{eqc8}
Dp(a) [H_{N+1}] \= a \Gamma - \Gamma a +  \sum_{i=1}^N c_i Dp(a) [H_i] .
\ee
Let $p(x) = x^r$, the $r^{\rm th}$ coordinate function, in \eqref{eqc8}.
This yields
\be
\label{eqc9}
H^r_{N+1} \=  a \Gamma - \Gamma a +  \sum_{i=1}^N c_i H^r_i .
\ee
As \eqref{eqc9} holds for $1 \leq r \leq d$ with the same $\Gamma$, this contradicts \eqref{eqc2.1}.

\vs
Case 2:  $N = 0$, and for all  $p \in I$, we have  $Dp(a)[H_{1}] = 0$.

Now the inductive hypothesis is that for all $M \in \mn$, there is a polynomial $p$ with $p(a) = M$.
The ideal $I$ is all polynomials that vanish at $a$. As in Case 1, we conclude that the map
\[
\pi : p(a) \ \to \
p(
\beginb
a& H_{1} \\
0 & a
\endb
) \=
\beginb
p(a)&Dp(a)[ H_{1}] \\
0 &p( a)
\endb
\]
is a well-defined homomorphism, and that
\[
Dp(a)[ H_{1}]
\]
is a derivation on $\{ p(a) \}$, so 
\[
Dp(a)[ H_{1}] \=
p(a) \Gamma - \Gamma p(a)
\]
for some $\Gamma \in \mn$.
Letting $p$ be each of the coordinate functions in turn, we get $H_1 = a \Gamma - \Gamma a $,
a 
contradiction to \eqref{eqc2.1}.

\vs
Case 3: As the previous two cases have been ruled out, we must be in the situation that for some
$p \in I$,  $Dp(a)[H_{N+1}] \neq 0 $.
As
\[
D qp (a) [H] \= Dq(a) [H] p(a) + q(a) Dp(a) [H] ,\]
we have that 
\[
{\cal D} \ :=\ \{ Dp(a)[H_{N+1}] : p \in I \}
\]
is invariant under multiplication on the left or right by elements of
\[
\{ q(a) : q \in I \} .
\]
Since $a$ is broad, we have that ${\cal D}$ is a non-empty ideal in $\mn$, and therefore all of $\mn$.

Choose now $M$ and  $K_1, \dots, K_{N+1}$ in $\mn$. By the inductive hypothesis, we can find 
a polynomial $q$ such that
\[
q(a) \= M,\quad {\rm and}\quad \ Dq(a)[H_i] = K_i, \forall\,  i \leq N .
\]
Since we are in Case 3, there is a polynomial $p \in I$ such that
\[
Dp(a) [H_{N+1}] \= 
K_{N+1} - Dq(a) [H_{N+1}] .
\]
Then the polynomial $r = p+q$ satisfies
\[
r(a) \= M,\quad {\rm and}\quad \ Dr(a)[H_i] = K_i, \forall\,  i \leq N+1 .
\]
\ep

As a consequence,  if
$Df(a)$ is of full rank, then, generically, there is a polynomial change of variables that
allows one to assume it is of full rank on $0^{d-k} \oplus \mnk := \{ (0, \dots, 0 , h) : h \in \mnk \}$.

\begin{cor}
\label{corc1}
Let $d \geq 2$, let $\Omega \subseteq \md$ be an nc domain, and 
fix $1 \leq k \leq d-1$.
Let $f$ be an
$\L(\c,\c^k)$ valued fine holomorphic function on $\Omega$.
Suppose that  $Df(a)$ is of  rank $kn^2$ for some point $a \in \Omega \cap \mnd$.
Suppose also that $(a^1, \dots, a^{d-k}, f(a))$ is broad, and 
the commutant of $a$ is $\c$.

Then there are 
a d.u open set $U$ containing a broad point $b$ and
an invertible nc polynomial map $\Phi$ from
 $U$ into $\Omega$, mapping the point $b$ to $a$, such that,
\begin{eqnarray}
\nonumber
&\forall \, h = (h^{d-k+1}, \dots, h^d)\in \mnk \setminus \{ 0 \} \quad\\
&\qquad D f \circ \Phi (b) [(0,\dots,0,h^{d-k+1},\dots,h^d)] \neq 0.
\label{eqc20}
\end{eqnarray}

Moreover, if $\Omega$ is fat, then $U$ can be chosen to be a fat nc domain.
\end{cor}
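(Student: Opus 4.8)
The plan is to use Theorem~\ref{thmc2} to manufacture a single polynomial map $\Phi$ that does two things simultaneously: it sends a convenient broad point $b$ to $a$, and its composition with $f$ has, at $b$, a derivative that restricts to an isomorphism on $0^{d-k}\oplus\mnk$. The starting observation is linear-algebraic. Since $Df(a)$ has rank $kn^2$, we may choose matrices $H_{d-k+1},\dots,H_d\in\mnd$ whose images $Df(a)[H_j]$ form a basis of $\mnk$; in fact, since $Df(a)$ vanishes on $\{a\Gamma-\Gamma a:\Gamma\in\mn\}$ by Lemma~\ref{lembb3} (applied coordinatewise to $f$), these $H_j$ are linearly independent modulo the commutator space, and one checks $k n^2 \le (d-1)n^2$, so the hypothesis $N\le (d-1)n^2+1$ of Theorem~\ref{thmc2} is met with $N=kn^2$ (or with a slightly larger independent set if needed to also control the first $d-k$ slots; see below). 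Because $(a^1,\dots,a^{d-k},f(a))$ is broad and the commutant of $a$ is $\c$, Theorem~\ref{thmc2} lets us prescribe, for each coordinate, the value $p(a)$ and the values $Dp(a)[H_i]$ freely.

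Concretely, I would build $\Phi=(\Phi^1,\dots,\Phi^d)^t$ by choosing its components via Theorem~\ref{thmc2} so that: $\Phi^r(a)=a^r$ for $1\le r\le d$ (so $\Phi$ fixes $a$, and we may then take $b=a$, which is broad by hypothesis); $D\Phi(a)[H_j]=e_j$-type standard generators, arranged so that the block matrix $D\Phi(a)$, written with respect to the splitting $\mnd = \m_n^{d-k}\oplus\mnk$, is upper triangular with identity in the $(d-k)$-block and with its $\mnk\to\mnk$ corner equal to a prescribed isomorphism $T$; and $D\Phi(a)$ restricted to $0^{d-k}\oplus\mnk$ lands in $0^{d-k}\oplus\mnk$ and acts there as $T$. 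Then by the chain rule $D(f\circ\Phi)(a)[(0,\dots,0,h)] = Df(a)\big[D\Phi(a)[(0,\dots,0,h)]\big] = Df(a)[(0,\dots,0,Th)]$, and choosing $T$ so that $(0,\dots,0,Th)=\sum (\cdots) H_j$ recovers a basis of $\mnk$ as $h$ ranges over $\mnk$; hence the left side is nonzero for $h\ne 0$, which is exactly \eqref{eqc20}. The only subtlety is that Theorem~\ref{thmc2} prescribes derivatives against a fixed independent family $H_1,\dots,H_N$ of elements of $\mnd$; I would take that family to be a basis of a complement of $\{a\Gamma-\Gamma a\}$ inside $\mnd$ (of dimension $dn^2-n^2+1 = (d-1)n^2+1$, matching the bound), and read off the required linear data of $\Phi$ from its action on that basis. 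Invertibility of $\Phi$ near $b$ then follows from Theorem~\ref{thmb3}: since $D\Phi(a)$ was arranged to be an invertible block-triangular map, $\Phi$ is non-singular at $a$, hence injective and invertible as an nc map on a suitable nc domain $U$ (a basic d.u.\ or fat open neighborhood of $a=b$), with $\Phi(U)\subseteq\Omega$ after shrinking.

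For the final sentence: if $\Omega$ is fat, Theorem~\ref{prope1}(i) shows that $Df$ remains full rank, and in fact (part (ii)) that the restriction to the relevant slice stays nonsingular, on a fat neighborhood; combined with $\Phi$ being a polynomial map (hence fat holomorphic, and fat-to-fat continuous since polynomials are continuous in the fat topology by Proposition~\ref{prop10}), we may take $U$ to be a basic fat nc domain, e.g.\ some $F(b,\varepsilon)$, small enough that $\Phi(U)\subseteq\Omega$ and \eqref{eqc20} persists. The main obstacle I anticipate is the bookkeeping in the second paragraph: verifying that the linear data one wants to impose on $\Phi$ — fixing $a$, making $D\Phi(a)$ block-triangular with a prescribed invertible $\mnk$-corner, and keeping $0^{d-k}\oplus\mnk$ invariant under $D\Phi(a)$ — can all be simultaneously realized by a single application of Theorem~\ref{thmc2}, i.e.\ that the prescribed values $(p(a), Dp(a)[H_i])$ are mutually consistent and that there are enough free parameters (this is where $N\le (d-1)n^2+1$ and broadness of $(a^1,\dots,a^{d-k},f(a))$ are both used). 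Everything else is a routine chain-rule computation plus citation of the inverse function theorems already proved.
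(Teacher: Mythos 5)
Your proposal misreads a crucial hypothesis and consequently chooses the wrong base point for applying Theorem~\ref{thmc2}. You write ``we may then take $b=a$, which is broad by hypothesis,'' but the hypothesis of the corollary is that $(a^1,\dots,a^{d-k},f(a))$ is broad --- \emph{not} that $a$ is. These are different $d$-tuples, and the distinction is not cosmetic: having $\{a\}'=\c$ does not imply $a$ is broad. Indeed, the whole point of Section~\ref{secf} is exactly such an $x_0$: there $\{x_0\}'=\c$ yet $\A_{x_0}$ is a proper (triangular) subalgebra of $\m_2$, so $x_0$ is not broad. Since Theorem~\ref{thmc2} requires its base point to be broad, your construction cannot get off the ground at $a$.

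The paper instead takes $b=(a^1,\dots,a^{d-k},f(a))$ (the tuple the hypothesis actually says is broad) and builds $\Phi$ with $\Phi(b)=a$. Choosing $b$ this way is not just to satisfy Theorem~\ref{thmc2}'s broadness requirement; it is also what makes the forced directions harmless. On $\{b\Gamma-\Gamma b\}$, Theorem~\ref{thmc2} gives you no freedom --- by Lemma~\ref{lembb3}, $D\Phi(b)[b\Gamma-\Gamma b]=a\Gamma-\Gamma a$ automatically. One must check that this forced behavior is compatible with \eqref{eqc20}, and this is where the specific form of $b$ enters: if $b\Gamma-\Gamma b$ lies in $0^{d-k}\oplus\mnk$ and $a\Gamma-\Gamma a\in\ker Df(a)$, then $a^r\Gamma=\Gamma a^r$ for $r\le d-k$ and (again by Lemma~\ref{lembb3}) $f(a)\Gamma=\Gamma f(a)$, forcing $b\Gamma-\Gamma b=0$. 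Your sketch never confronts this forced-direction compatibility, because in the $b=a$ picture it looks like an identity map and the difficulty disappears from view --- precisely because you assumed away the case the corollary is designed to cover. The chain-rule and dimension-count ideas in your write-up are on the right track, but the argument needs to be recentered at the correct broad point $b\ne a$ and must explicitly dispose of the directions $\{b\Gamma-\Gamma b\}$, on which Theorem~\ref{thmc2} offers no control.
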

\bp
%

Choose $b = (a^1, \dots, a^{d-k}, f(a))$. 
By the chain rule, \eqref{eqc20} will hold provided 
\be
\label{eqc21}
\left\{D \Phi(b) [ \{ 0^{d-k} \oplus \mnk \} ] \right\}\ \cap\ 
{\rm ker} Df(a) \= \{ 0 \} .
\ee
By Theorem~\ref{thmc2} we can choose the polynomial entries $(p^1, \dots, p^d)$ of
$\Phi$ so that $\Phi(b) = a$ and 
the action of the derivative is arbitrary, except on the set
$
\{ b \Gamma - \Gamma b \} 
 $.
But on this set, by Lemma~\ref{lembb3}, we have
\be
\label{eqc20.1}
D\Phi (b) [ b \Gamma - \Gamma b ]
\=
\Phi(b)  \Gamma - \Gamma \Phi(b) \= a \Gamma - \Gamma a .
\ee
If this were in the kernel of $Df(a)$, we would have
\[
0 \=
Df(a) [a \Gamma - \Gamma a ] = f(a) \Gamma - \Gamma f(a) .
\]
But if this holds, and $b \Gamma - \Gamma b $ is in $\{ 0^{d-k} \oplus \mnk \}$,
then $b \Gamma - \Gamma b =0$. 

So  for any choice of $\Phi$ with $\Phi(b) = a$,
we have 
\[
\left\{D\Phi (b) [ \{ b \Gamma - \Gamma b\} \cap   \{ 0^{} \oplus \mnk \} ] \right\}
\ \cap \
{\rm ker} Df(a) \= \{ 0 \}
.\]
As $b$ is broad and $\{a\}' = \c$, the sets
$\{ b \Gamma - \Gamma b \}$ and $\{ a \Gamma - \Gamma a \}$ are both of dimension
$n^2 - 1$.
Now choose the derivatives of $\Phi$ in a set of directions that complements
$ \{ b \Gamma - \Gamma b \} $ so that $D\Phi(b)$ is of full rank and
\eqref{eqc21} holds.
Let
\be
\label{eqc22}
U \= \Phi^{-1} (\Omega) \cap \{ x: D \Phi (x) {\rm \ is\ invertible} \} .
\ee

Finally, if $\Omega$ is fat, then choose $U$ to be the intersection of the fat nc domain
$\Phi^{-1} (\Omega) $ with a fat neighborhood of $b$ on which $D \Phi$ is invertible,
which exists by Theorem \ref{prope1}.
\ep


%
%

\section{The range of an nc function}
\label{secf}

A necessary and sufficient condition that
the function
\[
f : s^{-1} x s \ \mapsto \ s^{-1} z s 
\]
is well-defined on the similarity orbit $S_{x}$ of $x$
is that $z$ be in $\{ x\}''$.
So if $f$ is an nc function on a d.u. open set, then for every $M$ in the commutant of $x$,
 $1 + tM$ must commute with $f(x)$ for $t$ small.
This imposes  the requirement that
\[
f(x) \in \{ x \}'' .
\]
When $d = 1$, we have $\Ax =  \{ x \}''$, but this containment can be proper for $d > 1$.
(By $\Ax$ we mean the algebra generated by $x$).
\begin{ques}
\label{r1}
If $f$ is a $\tau$ nc function on an $\tau$ open set $U$, is $f (x) \in \Ax$?
\end{ques}

A necessary condition for $f$ to be pointwise approximable by polynomials 
is that $f (x) \in \Ax$. In \cite{amfree}, the authors proved that a free holomorphic function
is locally the uniform limit of free polynomials, so the answer to Question~\ref{r1}
is yes for the free topology.

We shall show that the answer is no for the fat (and hence for the fine) topology.

 Indeed, let $x_0 \in \m_2^2$ be 
\[
x_0 \= 
\left[
\begin{pmatrix} 
0&1\\
0&0
\end{pmatrix} ,
\
\begin{pmatrix} 
1&0\\
0&0
\end{pmatrix}  \right],
\]
and let $z_0 \in \m^2 $ be
\[
z_0 \= 
\begin{pmatrix} 
0&0\\
1&0
\end{pmatrix}.
\]

As $\{ x_0 \}'$ is just the scalars, 
we have $z_0 \in \{ x_0 \}'' \setminus \A_{x_0}$, and
the function
\be
\label{r2}
f : s^{-1} x_0 s \ \mapsto \ s^{-1} z_0 s 
\ee
is well-defined on the similarity orbit $S_{x_0}$ of $x_0$.
We shall show that it extends to a fat holomorphic function.

%
%
%
%
Define $p$ by
\be
\label{eqr13}
p(X,Y,Z) \= (Z)^2 + XZ + ZX +YZ - \idd.
\ee
If $x_0 = (X,Y)$ and $z_0 = Z$ are substituted in \eqref{eqr13},
we get $p(x_0,z_0) = 0$.
Let 
\be
\label{eqr14}
a \= 
\left[
\begin{pmatrix} 
0&1\\
0&0
\end{pmatrix} ,
\
\begin{pmatrix} 
1&0\\
0&0
\end{pmatrix}
\
\begin{pmatrix} 
0&0\\
1&0
\end{pmatrix}
\right],
\ee

\begin{lem}
\label{lemr14}
\be
\label{eqr16}
\frac{\partial}{\partial Z} p(a) [h] \=
\begin{pmatrix} 
h_{11} + h_{12} + h_{21} & h_{11} + h_{12} + h_{22} \\
h_{11} + h_{22}   &    h_{12} + h_{21}
\end{pmatrix} .
\ee
\end{lem}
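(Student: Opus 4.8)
The plan is to read off the partial derivative directly from the definition \eqref{eqa1} and then carry out the $2\times 2$ matrix arithmetic. First I would note that, by \eqref{eqr13}, the polynomial $p$ depends on $Z$ only through the quadratic term $(Z)^2$ and the linear terms $XZ + ZX + YZ$, the remaining term $-\idd$ being constant. Holding $X = X_0$, $Y = Y_0$, $Z = Z_0$ fixed and differentiating in the direction $h$, the Leibniz rule therefore gives
\[
\frac{\partial}{\partial Z} p(a)[h] \= Z_0 h + h Z_0 + X_0 h + h X_0 + Y_0 h ,
\]
where $X_0, Y_0, Z_0$ are the three entries of $a$ displayed in \eqref{eqr14}; the constant term contributes nothing.

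Next I would substitute a generic $h = \begin{pmatrix} h_{11} & h_{12} \\ h_{21} & h_{22} \end{pmatrix}$ together with $X_0 = \begin{pmatrix} 0 & 1 \\ 0 & 0 \end{pmatrix}$, $Y_0 = \begin{pmatrix} 1 & 0 \\ 0 & 0 \end{pmatrix}$, $Z_0 = \begin{pmatrix} 0 & 0 \\ 1 & 0 \end{pmatrix}$, compute the five products $Z_0 h$, $h Z_0$, $X_0 h$, $h X_0$, $Y_0 h$ individually, and add them entrywise. Collecting the four entries of the sum produces $h_{11} + h_{12} + h_{21}$ in position $(1,1)$, $h_{11} + h_{12} + h_{22}$ in position $(1,2)$, $h_{11} + h_{22}$ in position $(2,1)$, and $h_{12} + h_{21}$ in position $(2,2)$, which is exactly \eqref{eqr16}.

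I do not expect any genuine obstacle: the argument is a short bookkeeping computation once the Leibniz-rule formula above is in hand. The only points requiring care are keeping the left and right multiplications by the non-commuting matrices $Z_0$ and $X_0$ distinct, and remembering that $\idd$ drops out of the derivative.
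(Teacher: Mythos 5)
Your proposal is correct, and it is exactly the intended argument: the paper states Lemma~\ref{lemr14} without proof, implicitly leaving the reader to apply the Leibniz rule to \eqref{eqr13} to obtain $\frac{\partial}{\partial Z} p(a)[h] = Z_0h + hZ_0 + X_0h + hX_0 + Y_0h$ and then carry out the five $2\times 2$ multiplications. I verified the arithmetic and the four resulting entries match \eqref{eqr16}.
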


It is immediate from \eqref{eqr16} that $ \frac{\partial}{\partial Z} p(a): \m_2 \to \m_2$ is onto, and so has a 
right inverse. 
By Theorem~\ref{prope1},
there is a fat domain $\Omega \ni a$
such that $\frac{\partial}{\partial Z} p(\lambda)$ is non-singular for all $\lambda \in \Omega$.

Now we invoke Theorem~\ref{thmc1}. Let $V$ be the projection onto the first two coordinates
of $\Omega$. This is a fat domain containing $x_0$.
We conclude:
\bt
\label{thmf1}
There is a fat domain $V$ containing $x_0$ and a fat holomorphic function $g$ defined on $V$ 
such that $g(x_0) \notin \A_{x_0}$.
\et

\section{No free implicit function theorem}
\label{sech}

In this section we prove that the 
implicit function theorem \ref{thmc1}
is false in the free category. Indeed, we show that there is a dichotomy: one cannot have an admissible topology $\tau$  for which the maps $x  \mapsto \| q (x)\|$ are continuous for all $q \in \pd$
and for which one has both an implicit function theorem (as in the fat and fine topologies) and
an affirmative answer to Question~\ref{r1}.

Let $p(X,Y,Z)$ be as in \eqref{eqr13}, and define
\be
\label{eqh03}
\Phi(X,Y,Z) = (X,Y,p(X,Y,Z)).
\ee

Recall the following condition on solving a Sylvester equation, also called
 a matrix Ricatti equation
\cite{syl}.
\bl
\label{lemh05}
The matrix equation
$AH-HB =0$, for $A,B,H \in \mn$, has a non-zero solution $H$ if and only if
$\sigma(A) \cap \sigma(B) \neq \emptyset.$ The dimension of the set of solutions is
$ \# \{ (\lambda,\mu) :  \lambda \in \sigma(A), \mu \in \sigma(B) , \lambda = \mu  \} $,
where eigenvalues are counted with multiplicity.
\el

\bl
\label{lemh1}
The derivative of $\Phi$, and  $\frac{\partial}{\partial Z} p$,  are each non-singular if and only if
\be
\label{eqh1}
\sigma(X + Y + Z) \cap \sigma (-X-Z) \= \emptyset.
\ee
\el
\bp
$D\Phi$ is non-singular if and only if $\frac{\partial}{\partial Z} p$ is.
\beq
\frac{\partial}{\partial Z} p (X,Y,Z)[H]
&\=&
ZH + HZ + XH + HX + Y H \\
&=&
(X+Y+Z) H + (X+Z) H .
\eeq
The result now follows from Lemma \ref{lemh05}.
\ep
\bl
\label{lemh2}
Let $a$ be as in \eqref{eqr14}.
There is a free neighborhood of $a$ on which \eqref{eqh1} holds; moreover it is of the form
$\gdel$ where $\delta$ is a diagonal matrix of polynomials.
\el
\bp
The eigenvalues of $a^1 + a^2 + a^3$ are $(1 \pm \sqrt5)/2$; call them $\l_1 $ and $\l_2$.
The eigenvalues of $a^1 + a^3$ are $\pm 1$.
Let $\vare >0$ be such that the closed disks of radius $\vare$ and centers
$\l_1,\l_2,1,-1$ are disjoint.

Let $\delta(x)$ be the $2$-by-$2$ diagonal matrix with entries
\[
M(x^1 + x^2 + x^3 - \lambda_1) (x^1 + x^2 + x^3 - \lambda_2) \ {\rm and\ }
M(x^1 + x^3 - 1)(x^1 + x^3 + 1).
\]
By choosing $M$ large enough, one can ensure that if $x \in \gdel$, then
\[
\sigma(x^1 + x^2 + x^3) \subset \D(\l_1,\vare) \cup \D(\l_2,\vare)\
{\rm and \ }
\sigma(x^1  + x^3) \subset \D(1,\vare) \cup \D(-1,\vare).
\]
\ep
\bt
\label{thmh1}
Let $\tau$ be an admissible topology, defined on $\md$ for all $d \geq 2$.
Suppose $\tau$ has the property that for each $q \in \pd$,
the map $x \mapsto ||q(x)||$ is $\tau$-continuous from $\md$ to $\R^+$.
If every $\tau$ holomorphic function is pointwise approximable by free polynomials, then 
Theorem \ref{thmc1} does not hold in the $\tau$ category.

If, in addition, $\tau$ has the property that   the projection maps from $\md$ to $\m^{[d-1]}$ are open, then
Theorem~\ref{thmb3} also does not hold in the $\tau$ category.
\et
\bp
Let $\Phi$ be as in \eqref{eqh03} and $\gdel$ as in Lemma~\ref{lemh2}.
By Lemma~\ref{lemh1},  $\frac{\partial}{\partial Z} p$ and   $D\Phi$ are non-singular on $\gdel$,
and by hypothesis, $\gdel$ is $\tau$-open.
If the  Implicit function theorem were true for $\tau$, applying it to the set  $Z_p \cap \gdel$, 
there would be a $\tau$ open neighborhood $W$ of $x_0 \in \m_2^2$ and a $\tau$ holomorphic
function $g$ such that $g(x_0) = z_0$. This cannot occur, because $z_0 \notin \A_{x_0}$.

If the $\tau$ Inverse function theorem were true, applying it to the map $\Phi$ on $\gdel$ 
and repeating the proof of Theorem~\ref{thmc1} would yield the $\tau$ Implicit function theorem and 
the function $g$.
\ep

\begin{cor}
Theorem \ref{thmc1} does not hold in the free category.
\end{cor}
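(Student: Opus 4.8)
The statement to prove is the corollary: Theorem~\ref{thmc1} does not hold in the free category.

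The plan is to deduce this directly from Theorem~\ref{thmh1} by verifying that the free topology satisfies the two hypotheses of that theorem, and then observing that the failure of the $\tau$ Implicit Function Theorem for $\tau$ equal to the free topology is exactly the assertion that Theorem~\ref{thmc1} fails in the free category. So the proof is really a matter of checking that the free topology is an admissible topology for which (a) the maps $x \mapsto \|q(x)\|$ are continuous for every $q \in \pd$, and (b) every free holomorphic function is pointwise approximable by free polynomials.

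Here are the steps in order. First I would recall that the free topology is admissible: this is part of Proposition~\ref{prop10}, since the basic free open sets $\gdel$ are bounded nc domains and form a basis. Second, for continuity of $x \mapsto \|q(x)\|$: given $q \in \pd$ and a constant $c > 0$, the set $\{x : \|q(x)\| < c\}$ is, up to rescaling $q$, precisely a basic free open set $G_{q/c}$ (taking $J$ equal to the matrix size of $q$, or simply $J=1$ when $q$ is scalar-valued); hence the preimage of any open ball in $\R^+$ under $x \mapsto \|q(x)\|$ is free-open, and more generally preimages of open sets in $\R^+$ are unions of such, so the map is continuous. Third, pointwise approximability: this is the Oka--Weil phenomenon for the free topology established in \cite{amfree} and already quoted in the Introduction of this excerpt --- every free holomorphic function $f$ on a free-open set is locally a uniform (hence pointwise) limit of free polynomials, and in particular $f(x)$ lies in $\Ax$ for every $x$ in the domain. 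Fourth, with both hypotheses of Theorem~\ref{thmh1} verified, that theorem tells us Theorem~\ref{thmc1} does not hold in the free category, which is the desired conclusion. (One does not need the additional openness hypothesis on projections here, since only the Implicit Function statement is being contradicted.)

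I do not anticipate a serious obstacle: the only content is matching definitions. The one point requiring mild care is the continuity claim --- one must note that $x \mapsto \|q(x)\|$ is continuous, not merely that sublevel sets are open, but since the sublevel sets $\{\|q(x)\| < c\}$ over all $c>0$ together with complements of superlevel sets generate the topology of $\R^{+}$ restricted to the relevant range, and each $\{\|q(x)\| < c\}$ is free-open while each $\{\|q(x)\| \le c\}$ is free-closed (its complement $\{\|q(x)\| > c\}$ being a union of basic free open sets via scaling), continuity follows. Alternatively one can simply remark that it suffices to check preimages of a subbasis of $\R^+$ and that $[0,c)$ and $(c,\infty)$ pull back to free-open sets. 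The remaining ingredient, the Oka--Weil theorem for the free topology, is imported wholesale from \cite{amfree}, so no work is needed there.
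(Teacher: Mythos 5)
Your route is the paper's (implicit) one: deduce the corollary from Theorem~\ref{thmh1} with $\tau$ equal to the free topology. The admissibility step and the Oka--Weil/approximability step are both correct, and so is the observation that sublevel sets are basic free open sets: $\{x : \|q(x)\| < c\} = G_{q/c}$. However, your continuity check has a gap at the superlevel sets. You claim $\{x : \|q(x)\| > c\}$ is ``a union of basic free open sets via scaling,'' but scaling $q$ by constants only changes the threshold of the \emph{sublevel} sets and never produces a superlevel set. Since free polynomials have scalar coefficients, for a point $x_0 \in \mnd$ with $n > 1$ there is no polynomial $\delta$ playing the role of ``$M(x - x_0)$,'' so the obvious attempt to manufacture a small free neighborhood of $x_0$ inside $\{\|q\| > c\}$ fails, and it is not at all clear that $\|q(\cdot)\|$ is free-continuous in the full sense demanded by the hypothesis of Theorem~\ref{thmh1}.

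The gap is harmless for the corollary, because the continuity hypothesis of Theorem~\ref{thmh1} is used in its proof only to conclude that the specific set $\gdel$ constructed in Lemma~\ref{lemh2} is $\tau$-open, and that is automatic for the free topology: $\gdel$ is a basic free open set by definition. Combined with the pointwise approximability of free holomorphic functions from \cite{amfree}, the proof of Theorem~\ref{thmh1} then runs verbatim. You should therefore either prove the superlevel-set claim properly or, more economically, observe that only the openness of sublevel sets (equivalently, of each $\gdel$) is actually needed.
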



\section{Free Algebraic Sets}
\label{secg}

By a {\em free algebraic set} in $\md$ we mean the common zero set of some set of free polynomials.

\begin{example}
\label{exg1}

Consider the polynomial
\[
p(X,Y) \= a X^2 + b XY + c YX ,
\]
where $b \neq - c$, and let $V = Z_p$.
The partial derivative with respect to $Y$ is
\[
\frac{\partial}{\partial Y} p(X, Y) [ H] \= b  X H + c H X .
\]
By Lemma~\ref{lemh05}, the Sylvester equation $
b  X H + c H X = 0 $
has a non-zero solution if and only if $\sigma (bX) \cap \sigma (-cX) $ is non-empty.
Assume that 
\be
\label{eqg1}
p(X_0,Y_0) = 0 , \quad {\rm and }\quad  \sigma (bX_0) \cap \sigma (-cX_0) = \emptyset .
\ee
Then there is a fat neighborhood of $X_0$ on which 
\be
\label{eqg2}
\sigma (bX) \cap \sigma (-cX)  \= \emptyset ,
\ee
so by Theorem~\ref{thmc1} there is a function $g$ such that locally 
$V = \{ (X, g(X) ) \}$.
In particular, this forces $Y$ to commute with $X$, so locally
\[
X( aX+ (b+c)Y) \= 0.
\]
Therefore 
\be
\label{eqg5}
Y \= -\frac{a}{b+c} X
\ee
since  $X$ is invertible by \eqref{eqg2}.

So if \eqref{eqg1} holds, $X_0$ and $Y_0$ commute, and 
\[
Y_0 \= -\frac{a}{b+c} X_0 .
\]

Dropping assumption \eqref{eqg2}, how many non-commuting solutions are there?
For example, the non-commuting pair
\[
\left[
\begin{pmatrix} 
b&0\\
0&-c
\end{pmatrix} ,
\
\begin{pmatrix} 
-\frac{ab}{b+c}&0\\
e& \frac{ac}{b+c}
\end{pmatrix}  
\right]
\]
satisfies $p(X,Y) =0$ for any $e \in \C$.

Let $k$ be the number of common eigenvalues of $bX$ and $-cX$, counting multiplicity.
%
For fixed  $X$, the equation \[
bXY - c YX \= - aX^2
\]
 always has one solution given by \eqref{eqg5}.
By Lemma~\ref{lemh05}, it therefore has a $k$ dimensional set of solutions.
If $X$ is invertible, the solution from \eqref{eqg5} is the unique commuting one, so all the others
do not commute.

What is the dimension of the set of non-commuting pairs $(X,Y)$ in $\mn^2$ annihilated by $p$?
If $-b/c$ is a root if unity, it can be larger than $n^2$. But if $\alpha = -b/c$ is not a root of unity,
it is exactly $n^2$ when $n \geq 2$.
Indeed, suppose $X$ has eigenvalues
\[
\l_1, \alpha \l_1, \dots, \alpha^{k_1} \l_1, \l_2, \alpha \l_2, \dots, \alpha^{k_2} \l_2, \dots,
\l_r, \dots, \alpha^{k_r} \l_r 
\]
with corresponding multiplicities
\[
d_{1,0}, d_{1,1}, \dots, d_{1, k_1},d_{2,0}, d_{2,1}, \dots , d_{2,k_2},\dots, d_{r,0}, \dots, d_{r, k_r} ,
\]
where for $i \neq j$, $\l_i$ is not a power of $\alpha$ times $\l_j$.
Then $$
k \= \sum_{i=1}^r \sum_{j=1}^{k_i} d_{i,j-1} d_{i,j} .
$$
The dimension of the set of $X$'s with this collection of eigenvalues is
\be
\label{eqg8}
r + n^2 - \sum_{i=1}^r \sum_{j=0}^{k_i} d_{i,j}^2 .
\ee
As 
\[
 \sum_{j=1}^{k_i} d_{i,j-1} d_{i,j} +1\  \leq \ \sum_{j=0}^{k_i} d_{i,j}^2 ,
\]
we get that the dimension of the set of pairs $(X,Y)$ in $Z_p$, 
which is \eqref{eqg8} plus $k$, is at most $n^2$.
However, this is attained with $ k  > 0$ by, for example, choosing $d_{1,0} = d_{1,1} = 1$,
and for $i > 1$, choosing $d_{i,0} = 1$ and $d_{i,j} = 0, j \geq 1$.

We summarize:

%

\bprop
\label{propg3} Assume $b \neq - c$. Let $X_0 \in \mn$ be fixed and invertible.
Let 
\[
{\cal Y} \= \{ Y \in \mn : 
a X_0^2 + bX_0Y + cYX_0 = 0 \} .
\]
Let $k$ be the number of common eigenvalues of $b X_0$ and $-cX_0$, counting multiplicity.

(i) If $k=0$, then ${\cal Y}$ has a unique element, which commutes with $X_0$.

(ii) If $k > 0$, then ${\cal Y}$ is a $k$-dimensional affine space in $\mn$,  and it contains a unique
element that commutes with $X_0$.

(iii) If $b/c$ is not a root of unity, then the dimension  of the set of non-commuting solutions 
in $\mn^2$ of $p(X,Y) = 0$ is exactly $n^2$ if $n \geq 2$, the same as the dimension of the set of commuting solutions.
\eprop

\end{example}
%
%
The example 
$$
p(X,Y) = (XY-YX)^2 - \idd
$$ shows that one can choose a polynomial
for which $Z_p$ contains no commuting elements, 
but  for a generic $p$ this does not happen.
We can extend this observation to ``codimension one''
free algebraic sets. For convenience, let us write elements of $\md$ as
$(X,Y^1, \dots, Y^{d-1})$.
\bt
\label{thmg1} Let $k=d-1$, and let $p_1, \dots, p_k$ be free polynomials in $\pd$
with the property that, when evaluated on $d$-tuples of complex numbers, they are not
constant in the last $k$ variables. Let $p = (p_1, \dots, p_k)^t$, and let 
\[V \= \{ (X,Y^1, \dots, Y^{k})\  : \ p(X, Y^1, \dots, Y^{k}) = 0\}.
\]

Let $B$ be the finite (possibly empty) set
\[
B \= \cup_{j=1}^k \{ x \in \C \ : \  \forall y \in \c^k, \ \ p_j(x,y^1, \dots, y^k) \neq 0\} .
\]
If $X_0$  in $\mn$ has $n$ linearly independent eigenvectors and $\sigma(X_0) \cap B = \emptyset$,
then there exists $Y_0$ in $\mn^k$ that  satisfies
$(X_0,Y_0) \in V$ and such that each element $Y_0^j$ commutes with $X_0$.

(ii) If $(X_0,Y_0) $ is in $V$ and $X_0$ and $Y_0$ do not commute, then we must have
\be
\label{eqg4}
(X_0,Y_0) \ \in \ V \cap 
\{ (X,Y) : D p (X,Y) {\rm \ is\ not\ full\ rank\ on\ } 0 \times \mn^k \}.
\ee
%
%
\et

\bp
%

(i) Write  $X_0$ as the diagonal matrix with diagonal entries $(x_1, \dots, x_n)$ with respect to a
basis of eigenvectors. 
Choose $Y_0^j$ to be  the diagonal matrix with diagonal entries $(y_1^j, \dots, y^j_n)$. Then $Y_0^j$ will commute with $X_0$; and $p(X_0,Y_0)$ will be zero if
$p(x_i,y^1_i, \dots, y^k_i) = 0$ for each $i$. This can be done by choosing $y_i$ to be a root of the polynomial
$p(x_i,y)$.

(ii) By Theorems~\ref{prope1} and \ref{thmc1}, if $ Dp$ is full rank on $0 \times \mnk$, then
 there is a fat holomorphic function $g$ that maps
$X_0$ to $Y_0$. Since $g$ is a function of one variable, this means $Y_0^j$ is in $\A_{X_0}$ for each $j$, and so
commutes with $X_0$. 
\ep

%

\bibliography{../../references}

\begin{thebibliography}{10}

\bibitem{amfree}
J.~Agler and J.E. M\raise.45ex\hbox{c}Carthy.
\newblock Global holomorphic functions in several non-commuting variables.
\newblock To appear.

\bibitem{amfreePick}
J.~Agler and J.E. M\raise.45ex\hbox{c}Carthy.
\newblock Pick interpolation for free holomorphic functions.
\newblock To appear.

\bibitem{akv06}
D.~Alpay and D.~S. Kalyuzhnyi-Verbovetzkii.
\newblock Matrix-{$J$}-unitary non-commutative rational formal power series.
\newblock In {\em The state space method generalizations and applications},
  volume 161 of {\em Oper. Theory Adv. Appl.}, pages 49--113. Birkh\"auser,
  Basel, 2006.

\bibitem{an69}
Robert F.~V. Anderson.
\newblock The {W}eyl functional calculus.
\newblock {\em J. Functional Analysis}, 4:240--267, 1969.

\bibitem{bgm06}
Joseph~A. Ball, Gilbert Groenewald, and Tanit Malakorn.
\newblock Conservative structured noncommutative multidimensional linear
  systems.
\newblock In {\em The state space method generalizations and applications},
  volume 161 of {\em Oper. Theory Adv. Appl.}, pages 179--223. Birkh\"auser,
  Basel, 2006.

\bibitem{fd93}
B.~Farb and R.K. Dennis.
\newblock {\em Noncommutative algebra}.
\newblock Springer, New York, 1991.

\bibitem{helt02}
J.~William Helton.
\newblock ``{P}ositive'' noncommutative polynomials are sums of squares.
\newblock {\em Ann. of Math. (2)}, 156(2):675--694, 2002.

\bibitem{hkm11a}
J.~William Helton, Igor Klep, and Scott McCullough.
\newblock Analytic mappings between noncommutative pencil balls.
\newblock {\em J. Math. Anal. Appl.}, 376(2):407--428, 2011.

\bibitem{hkm11b}
J.~William Helton, Igor Klep, and Scott McCullough.
\newblock Proper analytic free maps.
\newblock {\em J. Funct. Anal.}, 260(5):1476--1490, 2011.

\bibitem{hm12}
J.~William Helton and Scott McCullough.
\newblock Every convex free basic semi-algebraic set has an {LMI}
  representation.
\newblock {\em Ann. of Math. (2)}, 176(2):979--1013, 2012.

\bibitem{kvv12}
Dmitry~S. Kaliuzhnyi-Verbovetskyi and Victor Vinnikov.
\newblock Foundations of non-commutative function theory.
\newblock arXiv:1212.6345.

\bibitem{pas13}
J.E. Pascoe.
\newblock The inverse function theorem and the resolution of the {Jacobian}
  conjecture in free analysis.
\newblock arXiv:1303.6011.

\bibitem{pau02}
V.I. Paulsen.
\newblock {\em Completely bounded maps and operator algebras}.
\newblock Cambridge University Press, Cambridge, 2002.

\bibitem{po06}
Gelu Popescu.
\newblock Free holomorphic functions on the unit ball of {$B({\cal H})^n$}.
\newblock {\em J. Funct. Anal.}, 241(1):268--333, 2006.

\bibitem{po08}
Gelu Popescu.
\newblock Free holomorphic functions and interpolation.
\newblock {\em Math. Ann.}, 342(1):1--30, 2008.

\bibitem{po10}
Gelu Popescu.
\newblock Free holomorphic automorphisms of the unit ball of {$B({\cal H})^n$}.
\newblock {\em J. Reine Angew. Math.}, 638:119--168, 2010.

\bibitem{po11}
Gelu Popescu.
\newblock Free biholomorphic classification of noncommutative domains.
\newblock {\em Int. Math. Res. Not. IMRN}, (4):784--850, 2011.

\bibitem{sm83}
R.R. Smith.
\newblock Completely bounded maps between {C*-algebras}.
\newblock {\em J. Lond. Math. Soc.}, 27:157--166, 1983.

\bibitem{syl}
J.~Sylvester.
\newblock {Sur l'\'equations en matrices px = xq}.
\newblock {\em C.R. Acad. Sci. Paris}, 99:67--71, 1884.

\bibitem{tay73}
J.L. Taylor.
\newblock Functions of several non-commuting variables.
\newblock {\em Bull. Amer. Math. Soc.}, 79:1--34, 1973.

\end{thebibliography}

\end{document}